\documentclass[11pt, reqno]{amsart}
\usepackage{amssymb,amsthm,amsmath,amstext}
\numberwithin{equation}{section}
\usepackage{mathrsfs}  
\usepackage{bm}        
\usepackage{mathtools} 
\usepackage{tikz-cd}

\usepackage[left=1.28in,top=.8in,right=1.28in,bottom=.8in]{geometry}

\usepackage{graphicx}
\usepackage[all]{xy}
\usepackage[title]{appendix} 
\theoremstyle{plain}

\newtheorem{thmi}{Theorem}

\newtheorem{theorem}{Theorem}[section]

\newtheorem{lemma}[theorem]{Lemma}

\theoremstyle{definition}
\newtheorem{define}[theorem]{Definition}
\newtheorem{example}[theorem]{Example}

\theoremstyle{remark}
\newtheorem{remark}[theorem]{Remark}

\usepackage{hyperref}
\usepackage{color}

\hypersetup{colorlinks=true,linkcolor=blue,anchorcolor=blue,citecolor=blue,letterpaper=true}

\begin{document}
	
	\title[ ]
	{Persistent Simple-homotopy invariants via discrete Morse theory}

	\author{Divya Ahuja and Jaya NN Iyer}
	\address{Institute of Mathematical Sciences \\ %
		C.I.T. Campus \\ %
		Taramani, Chennai 600113. India}
	\email{divyaa@imsc.res.in,\,jniyer@imsc.res.in}
	\address{Max Planck Institute for Mathematics\\
		7 Vivatsgasse, Bonn 53111, Germany}
	\email{jniyer@mpim-bonn.mpg.de}

	\date{}

	\begin{abstract} 
		We introduce a refinement of persistent homology that detects simple-homotopy-theoretic phenomena invisible to homology. Given a filtered simplicial complex, we define the Morse complexity profile as the minimal number of critical simplices at each filtration level. We prove that this profile is invariant under levelwise simple-homotopy equivalence and detects filtrations indistinguishable by persistent homology. We establish conditional stability under simple interleavings and provide an efficient algorithm for Vietoris-Rips filtrations. We also introduce a persistent version of Whitehead torsion and show that it is invariant under both levelwise simple-homotopy equivalence  and  interleaving equivalence of filtrations.
	\end{abstract} 
	
	\maketitle
	
	\medskip
	{MSC(2020) Subject Classification: 55N31, 57Q10, 57Q70, 55P10, 68U05}
	
	\medskip
	{Keywords:} Discrete Morse Theory, Morse complexity profile, Morse spike, Persistent homology, Persistent Whitehead torsion, Computational topology
	\medskip

	\section{Introduction}
	\noindent Persistent homology has emerged as one of the central tools of topological data analysis (TDA), providing a robust and computable summary of the multiscale homological structure of filtered spaces. Since its foundational development by Herbert Edelsbrunner et al.\ (see \cite{ELZ}), persistent homology has proven remarkably effective in applications ranging from geometry and dynamics to machine learning (see, for instance, \cite{ HA, EH, MN, OU, ZC05}). Its success rests on two key properties: algebraic computability and stability under perturbation.
	However, persistent homology detects only additive invariants of filtered spaces, namely, Betti numbers and their persistence. As a consequence, it is insensitive to finer homotopy-theoretic structure. In particular, persistent homology cannot distinguish between spaces that are homotopy equivalent but not simple-homotopy equivalent. This limitation is intrinsic: homology factors through abelianization and therefore discards the combinatorial complexity captured by simple-homotopy theory in the sense of Whitehead \cite{JHC}.
	
	\smallskip
	\noindent In classical topology, simple-homotopy theory refines homotopy equivalence by distinguishing maps realizable through elementary collapses and expansions. The obstruction to a homotopy equivalence being simple is measured by Whitehead torsion. While full torsion invariants take values in algebraic K-theory and are computationally prohibitive for data-scale complexes, the underlying notion of collapsibility admits a combinatorial characterization via discrete Morse theory, introduced by Forman \cite{RB98}.
	This paper proposes a refinement of persistent homology based on discrete Morse theory. 
	
	\smallskip
	\noindent
	Let $K$ be a filtered simplicial complex with filtration $K_\bullet$ . We begin by introducing the notion of the minimal Morse number of a simplicial complex $M(K)$ (see Definition \ref{D3.1}), defined as the minimum of the total number of critical simplices achieved by any discrete Morse function on the complex. Using this notion, we associate to every filtered simplicial complex its Morse complexity profile $\mathcal{M}(K_\bullet)$ (see Definition \ref{D3.2}), which records the minimal Morse number at each filtration level. This profile captures combinatorial information beyond homology and allows us to detect filtration phenomena that remain invisible to persistent homology. In particular, it naturally highlights what we call Morse spikes (Definition~\ref{D3.3}), namely filtration indices at which the homology remains unchanged while the minimal Morse complexity increases. Such spikes correspond to transient failures of collapsibility, i.e., combinatorial obstructions to simple-homotopy equivalence that cannot be detected by homology alone.
	
	\smallskip
	\noindent We now outline the structure of the paper. In Section \ref{S2}, we begin with preliminaries on simplicial complexes, persistent homology, and discrete Morse theory in the sense of Forman. In Section \ref{S3}, we develop three central results. We begin by establishing a simple-homotopy invariance property, showing that the Morse complexity profile is preserved under levelwise simple-homotopy equivalences of filtrations (Theorem \ref{T3.4}). We then prove a strict refinement result, i.e., there exist filtrations that are indistinguishable by persistent homology but can be distinguished by their Morse complexity profiles (Theorem \ref{T3.8}). Finally, we establish a conditional stability theorem, where we show that the Morse complexity profile remains stable under interleavings that preserve simple-homotopy type (Theorem \ref{T3.11}). We further analyze the persistence and computational aspects of the Morse complexity profile in Section \ref{S4}. We show that Morse spikes persist up to a bounded shift in the filtration index. Moreover, the profile admits a linear-time approximation with negligible overhead beyond standard persistent homology, ensuring applicability to large filtrations such as Vietoris-Rips complexes arising from point-cloud data. In Section \ref{S5}, we introduce a persistent version of Whitehead torsion, denoted by $\mathcal{T}(K_\bullet)$ (see Definition \ref{D5.2}~\&~\ref{D5.3}), which provides a refinement beyond the Morse-theoretic perspective. We show that this invariant is preserved under both levelwise simple-homotopy equivalence and interleaving equivalence of filtrations.
	
	\smallskip
	\noindent Conceptually, this work positions persistent homology within a hierarchy of invariants:
	$$
	\textrm{Persistent Homology} \subset \textrm{ Persistent Morse Complexity} \subset \textrm{Persistent Whitehead Torsion}.
	$$
	
	\smallskip
	\noindent 
	The first inclusion is shown to be strict. The second suggests a pathway toward incorporating deeper simple-homotopy invariants into topological data analysis.
	
	\smallskip
	\noindent
	We now restate the main results in a sharper form.
	\begin{thmi}[Simple-Homotopy Invariance of Minimal Morse Number](see Theorem \ref{T3.4})
		Let $K$ and $L$ be two finite simplicial complexes that are simple-homotopy equivalent. Then, their minimal Morse numbers satisfy $$
		M(K)=M(L).
		$$
		Moreover, for each dimension $k$,
		$$
		m_k(K)=m_k(L),
		$$
		where $m_k(K)$ denotes the minimal number of critical $k$-simplices achievable.
	\end{thmi}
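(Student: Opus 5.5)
The plan is to reduce to a single elementary move. By definition, a simple-homotopy equivalence between $K$ and $L$ factors as a finite sequence of elementary collapses and elementary expansions. It therefore suffices to prove that if $K \searrow L$ by one elementary collapse, removing a free face $\sigma^{k-1}$ together with its unique coface $\tau^{k}$, then $m_j(K)=m_j(L)$ for every $j$. Summing over $j$ then gives $M(K)=M(L)$, and induction along the sequence gives the theorem.

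The easy inequality is $m_j(K)\le m_j(L)$. Take a discrete gradient vector field $V$ on $L$ that realizes the minimal number of critical $j$-simplices. Set $V'=V\cup\{(\sigma,\tau)\}$ on $K$. Since $\sigma$ is a free face of $\tau$ and $\tau$ is maximal in $K$, no gradient path in $K$ can enter $\tau$ except through $\sigma$. Hence $V'$ remains acyclic, and its critical cells are exactly those of $V$. This proves $m_j(K)\le m_j(L)$ for all $j$.

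The reverse inequality $m_j(L)\le m_j(K)$ is the main obstacle. Start from an optimal acyclic matching $W$ on $K$. If $(\sigma,\tau)\in W$, simply delete this pair. Otherwise $\sigma$ and $\tau$ are critical, or matched elsewhere, in the following ways:
\begin{itemize}
\item $\tau$ may be matched with some facet $\rho\neq\sigma$;
\item $\sigma$ may be matched with a face $\nu$, since it has no other coface.
\end{itemize}
In these cases I would try to re-route the matching along gradient paths, in the style of Forman's cancellation of critical pairs, so that $(\sigma,\tau)$ becomes a matched pair without creating new critical cells. Restricting to $L$ would then give the bound.

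I expect this re-routing step to fail in general, and this is the point I would scrutinize first. The dunce hat $D$ is contractible, so it is simple-homotopy equivalent to a point, because $\mathrm{Wh}(1)=0$. Yet $D$ has no free faces, so every discrete Morse function on it has more than one critical cell and $M(D)>1=M(\mathrm{pt})$. Consequently the argument can only establish the monotonicity $m_j(K)\le m_j(L)$ along collapses $K\searrow L$. To obtain equality, I would look for the additional hypothesis under which the statement is intended, for instance the definition of $M$ in Definition~\ref{D3.1} or a restriction to collapse-only equivalences, and check how that hypothesis interacts with the re-routing step.
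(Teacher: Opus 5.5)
Your diagnosis is right, and the defect lies in the statement and in the paper's proof, not in your argument. The dunce hat $D$ is contractible, so any homotopy equivalence $D\to\mathrm{pt}$ has torsion in $\mathrm{Wh}(1)=0$ and is therefore simple by Theorem~\ref{T2.18}. An explicit realization is $D\nearrow D\times I\searrow\mathrm{pt}$, using Zeeman's theorem that $D\times I$, suitably triangulated, is collapsible. Yet $M(D)\ge 2>1=M(\mathrm{pt})$ by Lemma~\ref{L3.6}. Likewise $m_1(D),m_2(D)\ge 1$, while $m_1(\mathrm{pt})=m_2(\mathrm{pt})=0$. The paper is in fact inconsistent with itself here. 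In Theorem~\ref{T3.8} the inclusions $\mathrm{pt}\subset D\subset\mathrm{Cone}(D)$ are homotopy equivalences of contractible complexes, hence simple. Theorem~\ref{T3.4} would then force the profile $(1,\ge 2,1)$ to be constant.

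The paper's proof of $M(K')\le M(K)$ breaks exactly at the re-routing step you flagged. Its Case~3 asserts that a non-critical $\sigma$ must be paired with a $k$-simplex, hence with $\tau$. This overlooks that $\sigma$ can be the top of a pair $(\nu^{k-2},\sigma)$. Its four cases also never treat the configuration in which $(\alpha,\tau)$ and $(\nu,\sigma)$ both lie in the optimal field.

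Each of your two bullets is harmless on its own. Replacing $(\alpha,\tau)$, or $(\nu,\sigma)$, by $(\sigma,\tau)$ keeps the field acyclic, because no $V$-path can return to $\sigma$, whose only coface is $\tau$. It also keeps the total number of critical cells. The second swap, however, moves a critical cell from dimension $k$ to dimension $k-2$, so it does not control $m_j$. The case where $\sigma$ and $\tau$ are both critical cannot occur in an optimal field, since adding $(\sigma,\tau)$ would remove two critical cells. So the simultaneous configuration is the genuine obstruction. Every optimal field on $K$ must be in that configuration at any elementary step where $M$ increases, and such a step must occur somewhere along $D\times I\searrow D$.

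What survives is precisely your monotonicity: $K\searrow L$ implies $m_j(K)\le m_j(L)$ for all $j$ and $M(K)\le M(L)$. Two small corrections to your write-up:

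\emph{Summing.} $M(K)$ is the minimum of a sum, so in general only $M(K)\ge\sum_j m_j(K)$ holds. You should not derive the claim for $M$ by summing the claims for the $m_j$. Instead, apply your extension argument directly to an $M$-optimal field on $L$; it works verbatim.

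\emph{Collapse-only equivalences.} Restricting to these does not restore equality. The collapse $D\times I\searrow D\times\{0\}$, with $D\times I$ collapsible, gives $M(D\times I)=1<M(D)$, so the inequality is strict in general.
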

	
	\begin{thmi}(see Theorem \ref{T3.8})
		There exists a finite filtration 
		$$
		K_\bullet=\{K_0\subset K_1\subset K_2\}
		$$
		of connected finite simplicial complexes such that:
		
		\smallskip
		i) For all $i,j$ the induced maps
		$$
		H_{\ast}(K_i) \rightarrow H_{\ast}(K_j)
		$$
		are isomorphisms.
		
		\smallskip
		ii) The persistence module $H_{\ast}(K_\bullet)$ is isomorphic to that of a point.
		
		\smallskip
		iii) The Morse complexity profile $\mathcal{M}(K_\bullet)$ is non-constant.
	\end{thmi}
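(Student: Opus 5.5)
Theorem 2 is an existence statement, so the plan is to exhibit an explicit three-step filtration and check (i)--(iii) directly. The idea is to pass through a contractible but non-collapsible complex. Concretely, I take:
\begin{itemize}
\item $K_0$ a single vertex, or a triangulated disk containing it;
\item $K_1$ a triangulation of Bing's house with two rooms, or of the dunce hat, built so that $K_0 \subset K_1$;
\item $K_2$ the cone on $K_1$, or any collapsible complex containing $K_1$, for instance a triangulated $3$-ball in which the dunce hat sits as a subcomplex.
\end{itemize}
All three complexes are contractible, hence connected.

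For (i) and (ii), each $K_i$ is contractible, so $\tilde H_*(K_i)=0$ and $H_*(K_i)\cong H_*(\mathrm{pt})$. Each inclusion $K_i\hookrightarrow K_j$ induces a map $H_0(K_i)\to H_0(K_j)$ between copies of $\mathbb{Z}$ sending the class of a vertex to the class of a vertex. This map is therefore an isomorphism, and all higher groups vanish. Consequently the persistence module $H_*(K_\bullet)$ is the constant module $\mathbb{Z}$ in degree $0$ with identity structure maps, which is that of a point.

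For (iii), I compute $M(K_i)$ for each level. Since $K_0$ and $K_2$ are collapsible, they admit discrete Morse functions with a single critical vertex, so $M(K_0)=M(K_2)=1$. The key point is $M(K_1)>1$. By Forman's theorem recalled in Section~\ref{S2}, a discrete Morse function with exactly one critical simplex (necessarily a vertex) exists if and only if the complex collapses to a point. The dunce hat and Bing's house have no free faces: every edge lies in at least two triangles. Hence no elementary collapse can even start, and they are not collapsible. Therefore every discrete Morse function on $K_1$ has at least two critical cells. Since the Euler characteristic is $1$, critical counts come in pairs beyond the vertex, so in fact $M(K_1)\ge 3$: one critical vertex, one critical edge and one critical triangle. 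In either case the profile $(1,M(K_1),1)$ is non-constant, and level $1$ is a Morse spike in the sense of Definition~\ref{D3.3}.

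The main obstacle is making the construction fully explicit and rigorous. This means choosing a concrete triangulation of the dunce hat, with the standard 8-vertex model, and verifying combinatorially that every edge has degree $\ge 2$, so that the absence of free faces is checked rather than quoted. It also means realizing $K_1$ as a subcomplex of a collapsible $K_2$. The cone $K_2=v*K_1$ resolves the second point cleanly, since cones are collapsible: pair each simplex $\sigma$ of $K_1$ with $v*\sigma$. I would use the cone to avoid any appeal to the delicate fact that the dunce hat embeds in a collapsible $3$-ball. I also need to take care that Theorem~\ref{T3.4} does not contradict the example. Simple-homotopy invariance of $M$ concerns equivalences, whereas the inclusion $K_1\subset K_2$ is merely a homotopy equivalence, not realized by collapses within the filtration. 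Indeed, since $M$ is not a simple-homotopy invariant in the naive sense, I would phrase the verification purely via computations of $M$ on each level, without invoking Theorem~\ref{T3.4}.
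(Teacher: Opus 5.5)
Your proposal is correct and is essentially the paper's proof: a vertex inside the dunce hat inside its cone, with $M(K_0)=M(K_2)=1$ from collapsibility and $M(K_1)\ge 2$ from non-collapsibility. Your no-free-faces check, the explicit acyclic cone matching $\sigma\leftrightarrow v*\sigma$, and the parity bound $M(K_1)\ge 3$ are correct additions. One correction to your closing remark: all levels are simply connected and $\mathrm{Wh}(1)=0$, so the inclusion $K_1\hookrightarrow K_2$ is a genuine simple-homotopy equivalence, not "merely a homotopy equivalence." This example therefore does not sidestep Theorem~\ref{T3.4} but actually contradicts it as stated, and you are right not to invoke it.
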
 
	
	\noindent In particular, persistent homology does not determine the Morse complexity profile.
	
	\begin{thmi}[Detection of Non-collapsible contractible stages] (see Theorem \ref{T3.9}) Let $\{K_i\}_{i\geq 0}$ be a filtration of finite simplicial complexes. Suppose that there exists an index $t$ such that $K_t$ is contractible and not collapsible. Then,
		$$
		M(K_t)\ge2.
		$$
		Moreover, if $K_{t-1}$ and $K_{t+1}$ are collapsible, then
		$$
		M(K_{t-1}) = M(K_{t+1}) =1
		$$
		Consequently, a Morse spike occurs at the index $t$.
	\end{thmi}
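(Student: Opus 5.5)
The plan is to use three standard facts from Forman's discrete Morse theory. First, every discrete Morse function on a nonempty finite complex has at least one critical vertex. Second, $M(K)=1$ holds exactly when $K$ is collapsible. Third, a gradient vector field with a single critical simplex, which must be a vertex, yields a sequence of elementary collapses from $K$ to that vertex.

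For the first claim, $M(K_t)\ge 1$ because the global minimum of any discrete Morse function is a critical vertex. Suppose, for contradiction, that $M(K_t)=1$. Take a discrete Morse function $f$ realizing the minimum, so its only critical simplex is a vertex $v$. By Forman's collapse theorem, $K_t$ then collapses onto $v$, since the sublevel complexes change only by elementary collapses when no critical values are crossed. This contradicts the assumption that $K_t$ is not collapsible, so $M(K_t)\ge 2$.

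As a consistency check with the weak Morse inequalities, $M(K_t)\ge \sum_k b_k(K_t)=1$ only gives the bound $1$. The improvement to $2$ comes entirely from non-collapsibility. In fact, one expects the strong bound $M(K_t)\ge 3$, since critical cells can be cancelled only in pairs and the Euler characteristic forces $\sum_k (-1)^k m_k = 1$. However, $M(K_t)\ge 2$ is all the statement requires.

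For the second claim, collapsibility of $K_{t\pm1}$ gives a sequence of elementary collapses onto a vertex. Reversing this sequence defines an acyclic matching whose only unmatched simplex is that vertex, and this matching is the gradient of a discrete Morse function with exactly one critical cell. Combined with the lower bound $M\ge 1$, this gives $M(K_{t-1})=M(K_{t+1})=1$.

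For the final assertion, I unwind Definition \ref{D3.3}. Since $K_{t-1}$, $K_t$, $K_{t+1}$ are all contractible, their homology is that of a point, so it is unchanged across the indices $t-1,t,t+1$. Meanwhile $M(K_{t-1})=1<2\le M(K_t)$, so the profile strictly increases at $t$ and then drops at $t+1$, which is a Morse spike.

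The main obstacle I expect is the precise statement of the equivalence ``$M(K)=1 \iff K$ collapsible'' in the direction used for the first claim. I will cite Forman's theorem that $K$ collapses onto the subcomplex spanned by critical cells together with a discrete-gradient-ordered attachment, in the form: if all critical cells of $f$ lie in $K(c)$, then $K\searrow K(c)$. I will apply it with $c=f(v)$, where $K(f(v))=\{v\}$. I also need to check that Definition \ref{D3.3} requires only unchanged homology and an increase of $\mathcal M$ at $t$, and adjust the final sentence accordingly if it asks for the increase relative to both neighbours.
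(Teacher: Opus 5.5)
Your proposal is correct and is essentially the paper's argument. The bound $M(K_t)\ge 2$ is the content of Lemma~\ref{L3.6}: a Morse function whose only critical cell is a vertex forces a collapse onto that vertex. The values $M(K_{t\pm1})=1$ are Lemma~\ref{L3.5}: a collapse sequence gives an acyclic matching with one critical vertex. The spike then follows from contractibility of all three stages together with $M(K_{t\pm1})=1<2\le M(K_t)$, which gives both conditions (ii) and (iii) of Definition~\ref{D3.3}.

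Your use of the global-minimum argument for $M\ge 1$, and of the sublevel-collapse lemma instead of the paper's informal ``each gradient pair is an elementary collapse'', is if anything slightly more careful. Your side remark that $M(K_t)\ge 3$ is also right once it is phrased for a single Morse function $f$ rather than for the $m_k$: $\sum_k c_k(f)\equiv\sum_k(-1)^k c_k(f)=\chi(K_t)=1 \pmod 2$.
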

	
	\begin{thmi}
		[Conditional Stability under Simple interleaving](see Theorem \ref{T3.11})		Let $\{K_i\}_{i\ge 0}$ and $\{L_i\}_{i\ge 0}$ be two $\epsilon$-simple-interleaved filtrations such that for each $i$, the inclusion $\iota^{L}_{i, i+\epsilon}: L_i \longrightarrow L_{i+\epsilon}$ is a simple homotopy equivalence. 
		Then, for each $i$, the inclusion $K_i \hookrightarrow K_{i+\epsilon}$ is a simple homotopy equivalence. Furthermore, for every $t$, there exists $t'\neq t$ with $|t-t'|\le \epsilon$ such that $M(K_t)=M(L_{t'})$.
	\end{thmi}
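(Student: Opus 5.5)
The plan is to reduce everything to the composition formula for Whitehead torsion and then invoke Theorem~\ref{T3.4}. I will use the definition of an $\epsilon$-simple interleaving in the form I expect the paper to fix. It consists of maps $\varphi_i\colon K_i\to L_{i+\epsilon}$ and $\psi_i\colon L_i\to K_{i+\epsilon}$ that are simple homotopy equivalences. These maps are required to commute up to homotopy with the structure inclusions, so that
\[
\varphi_{i+\epsilon}\circ\iota^K_{i,i+\epsilon}\simeq \iota^L_{i+\epsilon,i+2\epsilon}\circ\varphi_i ,
\qquad
\psi_{i+\epsilon}\circ\varphi_i\simeq \iota^K_{i,i+2\epsilon}.
\]
Throughout I take $\epsilon>0$.

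\textbf{Step 1: $\iota^K_{i,i+\epsilon}$ is a homotopy equivalence.} In the first square, the right-hand side is a composite of two maps. The map $\iota^L_{i+\epsilon,i+2\epsilon}$ is a simple homotopy equivalence by hypothesis, and $\varphi_i$ is one by the interleaving. So $\varphi_{i+\epsilon}\circ\iota^K_{i,i+\epsilon}$ is a homotopy equivalence. Since $\varphi_{i+\epsilon}$ is itself a homotopy equivalence, we can compose with a homotopy inverse $\bar\varphi_{i+\epsilon}$. This shows $\iota^K_{i,i+\epsilon}\simeq \bar\varphi_{i+\epsilon}\circ \iota^L_{i+\epsilon,i+2\epsilon}\circ\varphi_i$, which is a homotopy equivalence.

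\textbf{Step 2: $\iota^K_{i,i+\epsilon}$ is simple.} Whitehead torsion depends only on the homotopy class of the map and satisfies $\tau(g\circ f)=\tau(g)+g_*\tau(f)$. Applying this to the square gives
\[
\tau(\varphi_{i+\epsilon})+(\varphi_{i+\epsilon})_*\tau(\iota^K_{i,i+\epsilon})
=\tau(\iota^L_{i+\epsilon,i+2\epsilon})+(\iota^L)_*\tau(\varphi_i)=0 .
\]
The first term on the left is $0$ because $\varphi_{i+\epsilon}$ is simple. Since $(\varphi_{i+\epsilon})_*$ is an isomorphism of Whitehead groups, it follows that $\tau(\iota^K_{i,i+\epsilon})=0$. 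Hence the inclusion $K_i\hookrightarrow K_{i+\epsilon}$ is a simple homotopy equivalence.

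\textbf{Step 3: matching of Morse numbers.} Fix $t$ and set $t'=t+\epsilon$, so that $t'\neq t$ and $|t-t'|=\epsilon$. The map $\varphi_t\colon K_t\to L_{t+\epsilon}$ is a simple homotopy equivalence, so Theorem~\ref{T3.4} gives $M(K_t)=M(L_{t'})$. The same argument applied to $\psi_{t-\epsilon}$ yields the alternative choice $t'=t-\epsilon$ when $t\ge\epsilon$.

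\textbf{Main obstacle.} The difficulty is definitional rather than computational. If the paper's notion of simple interleaving only asks that the composites $\psi\varphi$ and $\varphi\psi$ agree with the $2\epsilon$-inclusions up to simple homotopy, then simplicity of the individual maps $\varphi_i$ must be derived first. In that case I would argue as follows. The hypothesis that all $L$-inclusions are simple makes $\varphi_{i+\epsilon}\psi_i\simeq\iota^L_{i,i+2\epsilon}$ simple, and the analogous composites $\psi\varphi$ on the $K$ side are simple as well. A two-out-of-six argument on the chain
\[
K_i\xrightarrow{\varphi_i} L_{i+\epsilon}\xrightarrow{\psi_{i+\epsilon}} K_{i+2\epsilon}\xrightarrow{\varphi_{i+2\epsilon}} L_{i+3\epsilon}
\]
then shows that every map in the chain, in particular $\varphi_i$, is a homotopy equivalence. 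The torsion formula then forces $\tau(\varphi_i)=0$. The degenerate case $\epsilon=0$ must also be excluded, since then no $t'\neq t$ with $|t-t'|\le\epsilon$ exists.
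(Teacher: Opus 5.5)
\textbf{Gap: your proof uses hypotheses that Definition~\ref{D3.10} does not provide.} Under your assumed definition the argument is fine, and your Step~2 is the same torsion computation the paper uses. But in Definition~\ref{D3.10} the interleaving maps $\phi_t,\psi_t$ are only homotopy equivalences, and no square relating them to the structure inclusions is imposed. The only source of simplicity is condition (iii): levelwise simple homotopy equivalences $h_i\colon K_i\to L_i$ with $h_{i+\epsilon}\circ\iota^K_{i,i+\epsilon}\simeq\iota^L_{i,i+\epsilon}\circ h_i$. Your proof never uses the $h_i$. Step~2 needs $\tau(\varphi_{i+\epsilon})=0=\tau(\varphi_i)$, and Step~3 applies Theorem~\ref{T3.4} to $\varphi_t$; both require $\varphi$ to be simple, which is not assumed.

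Your fallback does not close this. The claim that $\psi_{i+\epsilon}\circ\varphi_i$ is simple is circular, since that composite is homotopic to $\iota^K_{i,i+2\epsilon}$, whose simplicity is essentially the conclusion. In fact conditions (i), (ii) plus simple $L$-inclusions cannot give the result. Take a subcomplex inclusion $j\colon X\hookrightarrow X'$ of finite connected complexes that is a homotopy equivalence with $\tau(j)\neq 0$, for example one end of a triangulated non-trivial $h$-cobordism. Set $K_0=X$, $K_i=X'$ for $i\ge1$, $L_i=X'$ for all $i$, $\epsilon=1$, $\varphi_0=j$, and all other $\varphi_i,\psi_i$ equal to identities. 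Then:
\begin{itemize}
\item every interleaving identity holds and all maps are homotopy equivalences;
\item the $L$-inclusions are identities, hence simple;
\item even your square $\varphi_1\circ\iota^K_{0,1}=\iota^L_{1,2}\circ\varphi_0$ holds, since both sides equal $j$.
\end{itemize}
Yet $K_0\hookrightarrow K_1$ is not simple. So everything hinges on simplicity of the interleaving maps, which is exactly what is unavailable.

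\textbf{The fix is to use $h$ where you used $\varphi$, with no index shift.} From $h_{i+\epsilon}\circ\iota^K_{i,i+\epsilon}\simeq\iota^L_{i,i+\epsilon}\circ h_i$, homotopy invariance and the composition formula give
\[
\tau(h_{i+\epsilon})+(h_{i+\epsilon})_*\tau(\iota^K_{i,i+\epsilon})=\tau(\iota^L_{i,i+\epsilon})+(\iota^L_{i,i+\epsilon})_*\tau(h_i)=0.
\]
Since $\tau(h_{i+\epsilon})=0$ and $(h_{i+\epsilon})_*$ is an isomorphism, $\tau(\iota^K_{i,i+\epsilon})=0$. Step~1 likewise becomes $\iota^K_{i,i+\epsilon}\simeq\bar h_{i+\epsilon}\circ\iota^L_{i,i+\epsilon}\circ h_i$, where $\bar h_{i+\epsilon}$ is a homotopy inverse of $h_{i+\epsilon}$. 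Theorem~\ref{T3.4}, applied to the simple inclusion and then to $h_{t+\epsilon}$, gives $M(K_t)=M(K_{t+\epsilon})=M(L_{t+\epsilon})$, so $t'=t+\epsilon$ works. This is the paper's proof. Your remark that $\epsilon>0$ is needed for some $t'\neq t$ to exist is correct; the paper leaves it implicit.
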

	\noindent In particular, Morse spikes persist under bounded index shift.
	
	\begin{thmi}(see Theorem \ref{T4.1})
		Let $\{K_t\}_{t\ge 0}$ be a filtration of finite simplicial complex $K$ with total number of simplices $N$. Then, a greedy discrete Morse reduction algorithm computes an approximation $C(K_t)$
		at each level with total time complexity of $O(N)$ beyond the cost of constructing the filtration.
	\end{thmi}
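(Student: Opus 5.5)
The approach is to build $C(K_t)$ incrementally along the filtration instead of recomputing a Morse matching at each level. Order all $N$ simplices of $K$ compatibly with the filtration: by filtration value, ties broken by dimension. The greedy reduction then processes simplices one at a time in this order. It maintains a partial acyclic matching $V$ on the subcomplex built so far, together with a running count of unmatched simplices per dimension. The approximation $C(K_t)$ is read off as the running count at the moment the last simplex of $K_t$ has been processed. Since every intermediate matching is a genuine discrete gradient on $K_t$, Forman's theory gives $C(K_t)\ge M(K_t)$, so $C$ is a valid upper approximation of the Morse complexity profile.

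\textbf{Greedy rule.} When a simplex $\sigma$ enters, we try to pair it with a currently unmatched facet $\tau\subset\sigma$ of codimension one. We do this only when $\tau$ has $\sigma$ as its unique coface among the simplices present so far that are not yet matched; in other words, $\tau$ is a free face in the current complex. Pairing along a free face is an elementary collapse, and every $V$-path can only descend through previously added simplices. Hence no closed $V$-path is created, and acyclicity is automatic without any global cycle check. This is the key structural point, and I would prove it first as a lemma. It rests on the observation that the newly added simplex $\sigma$ has no cofaces yet, so it cannot appear in the interior of a $V$-path that returns to itself. If no admissible facet exists, $\sigma$ is recorded as critical, and the per-dimension counter $c_{\dim\sigma}$ is incremented.

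\textbf{Complexity.} Each simplex is inserted once. For each simplex we store
\begin{itemize}
\item its list of facets,
\item a counter of its unmatched cofaces present so far.
\end{itemize}
Inserting $\sigma$ updates the counters of its $\dim\sigma+1$ facets, and scanning for a free facet costs the same. With dimension bounded (as for Vietoris--Rips filtrations truncated at a fixed dimension, or more generally treating the dimension $d$ as a constant), the work per simplex is $O(d)=O(1)$. Summing over the $N$ simplices gives $O(N)$ total. Recording $C(K_t)=\sum_k c_k$ at each filtration boundary adds $O(1)$ per level. The number of levels is at most $N$, since levels where nothing enters can share the previous value. Boundary and coboundary incidences are produced when the filtration is constructed, and that cost is excluded by the statement.

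\textbf{Main obstacle.} The delicate step is the amortization argument. A later simplex can make an earlier facet non-free, but it never makes a non-free facet free again, because we never delete simplices. So each facet-coface incidence is touched a constant number of times. The point to verify is that the greedy rule never revisits past decisions; a backtracking pass that repairs critical cells would break linearity. If an improvement pass is wanted, for example one that cancels critical pairs connected by a unique gradient path, I would present it separately, as it is not linear in general.
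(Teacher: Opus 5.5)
Your argument follows the same overall scheme as the paper's proof. Both insert each simplex once in a face-compatible order, maintain a greedy matching with a running critical count, and do bounded work per simplex. The difference is at the one nontrivial point, acyclicity.

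The paper lets a new simplex pair with any unmatched incident face or coface. It then asserts that a local check for a new directed cycle in the modified Hasse diagram costs $O(1)$, because each simplex has $O(1)$ incident faces and cofaces. You instead pair $\sigma$ only with a facet $\tau$ that is free in the current complex, so no cycle check is needed. Take any $V$-path $\tau_0,\sigma_0,\tau_1,\sigma_1,\dots$. The simplex $\sigma_i$ is a coface of $\tau_{i+1}$ different from $\sigma_{i+1}$. When $\tau_{i+1}$ was paired it had no other coface, so $\sigma_i$ was inserted after $\sigma_{i+1}$. Insertion times therefore strictly decrease along the path, which rules out closed paths. This, rather than $\sigma$ having no cofaces yet, is the actual reason acyclicity holds.

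Your route gives a provably constant-time update that touches only the $\dim\sigma+1$ facets of $\sigma$. The paper's $O(1)$ claim needs something like this: the number of cofaces of a simplex is not bounded in terms of the dimension, and detecting a cycle created by an arbitrary candidate pair generally requires following gradient paths. You also obtain $C(K_t)\ge M(K_t)$ explicitly. The price is that the free-face rule is more restrictive, so $C(K_t)$ may be a coarser upper bound than a rule admitting every acyclic pairing would give.

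One point must be repaired. Your two phrasings of the rule are not equivalent, and the data structure you list, a counter of \emph{unmatched} cofaces, implements the wrong one. Requiring only that $\sigma$ be the unique unmatched coface of $\tau$ can create cycles. Insert the boundary of a triangle as $a,b,c,ab,bc,ac$ and pair $(b,ab)$ and $(c,bc)$. Then $(a,ac)$ is allowed, because $ab$ is already matched. This produces the closed path $a,ac,c,bc,b,ab,a$ and zero critical cells, which is impossible for a gradient field on a circle.

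The counter must record all cofaces of $\tau$ present so far, and $\tau$ is eligible exactly when it is unmatched and this counter equals $1$. This is also the only version for which your remark that a non-free facet never becomes free again is true. Finally, state explicitly that the critical counter in dimension $\dim\tau$ is decremented when a previously critical $\tau$ becomes paired.
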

	\noindent Finally, in Section 5, we prove the following result.
	\begin{thmi}(see Theorem \ref{T5.4})
		Let $f_\bullet : K_\bullet=\{K_t\}_{t\in I} \longrightarrow \{L_t\}_{t\in I}=L_\bullet$ be a map of filtrations of connected finite simplicial complexes over an index set $I$ and let $J\subset I$ be a torsion-eligible window for both $K$ and $L$. Further, for each $t$, let $f_t : K_t \longrightarrow L_t$ be a simple-homotopy equivalence. Then,
		$$
		\tau^K_{s,t} = \tau^L_{s,t} \quad \text{for all } s \le t \text{ in } J.
		$$
		Consequently,
		$
		\mathcal{T}(K_\bullet) = \mathcal{T}(L_\bullet).
		$
	\end{thmi}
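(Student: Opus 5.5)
The plan is to reduce the statement to two classical facts about Whitehead torsion of homotopy equivalences between finite CW (here simplicial) complexes. The first is the composition formula $\tau(g\circ h)=\tau(g)+g_\ast\tau(h)$. The second is that torsion depends only on the homotopy class of the map, and vanishes exactly for simple-homotopy equivalences. I take $\tau^K_{s,t}$ to be the Whitehead torsion of the inclusion $\iota^K_{s,t}\colon K_s\hookrightarrow K_t$. I take torsion-eligibility of $J$ to mean that for $s\le t$ in $J$ these inclusions are homotopy equivalences of connected complexes, so that $\tau^K_{s,t}\in \mathrm{Wh}(\pi_1(K_t))$ is defined. The comparison $\tau^K_{s,t}=\tau^L_{s,t}$ is then to be read through the identification $\mathrm{Wh}(\pi_1(K_t))\cong \mathrm{Wh}(\pi_1(L_t))$ induced by $(f_t)_\ast$. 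This is an isomorphism because $f_t$ is a homotopy equivalence and hence induces an isomorphism on $\pi_1$.

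First, since $f_\bullet$ is a map of filtrations, for every $s\le t$ in $J$ the square
$$f_t\circ\iota^K_{s,t}=\iota^L_{s,t}\circ f_s$$
commutes. If the definition only requires commutativity up to homotopy, homotopy invariance of torsion makes that suffice. All four maps are homotopy equivalences: the inclusions by eligibility of $J$ for $K$ and for $L$, and $f_s,f_t$ by hypothesis. So both composites have well-defined torsion in $\mathrm{Wh}(\pi_1(L_t))$.

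Second, I apply the composition formula to the two sides:
$$\tau(f_t)+(f_t)_\ast\tau(\iota^K_{s,t})=\tau(\iota^L_{s,t})+(\iota^L_{s,t})_\ast\tau(f_s).$$
Because $f_s$ and $f_t$ are simple-homotopy equivalences, $\tau(f_s)=\tau(f_t)=0$. This leaves
$$(f_t)_\ast\tau^K_{s,t}=\tau^L_{s,t},$$
which is the desired equality under the identification above. Since $\mathcal{T}(K_\bullet)$ is by definition the collection $\{\tau^K_{s,t}\}_{s\le t\in J}$ (Definitions \ref{D5.2} and \ref{D5.3}), the equality of the profiles $\mathcal{T}(K_\bullet)=\mathcal{T}(L_\bullet)$ follows termwise.

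The main obstacle is bookkeeping rather than topology. One must check that the identifications of Whitehead groups are coherent across the window, i.e.\ compatible with the transition maps $(\iota_{t,u})_\ast$ for $t\le u$. This amounts to functoriality $(\iota^L_{t,u})_\ast (f_t)_\ast=(f_u)_\ast(\iota^K_{t,u})_\ast$, which again comes from the commuting squares. One must also handle basepoints, which are harmless since inner automorphisms act trivially on Whitehead groups. If the paper's definition of $\mathcal{T}$ records torsions only up to such isomorphisms, or as vanishing/non-vanishing data, this step becomes immediate.
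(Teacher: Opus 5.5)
Your argument is correct and is essentially the paper's proof. You use the square $f_t\circ\iota^K_{s,t}\simeq\iota^L_{s,t}\circ f_s$, then homotopy invariance and the composition formula for torsion, then the vanishing of $\tau(f_s)$ and $\tau(f_t)$ to get $(f_t)_\ast\tau^K_{s,t}=\tau^L_{s,t}$, and finally you identify the Whitehead groups via $(f_t)_\ast$ modulo inner automorphisms, exactly as the paper does.

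One small correction: by Definition \ref{D5.3}, $\mathcal{T}(K_\bullet)$ is the cohomology class $[\tau^K]\in H^1(P(J);\mathrm{Wh}(G))$, not the collection of torsions. Equality of the cocycles still gives equality of their classes at once, so your conclusion stands.
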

	\begin{thmi}(see Theorem \ref{T5.5})
		Let $K_\bullet = \{K_t\}_{t \in I}$ and $L_\bullet = \{L_t\}_{t \in I}$ be filtrations of connected finite simplicial complexes and $J\subset I$ be a torsion-eligible window for both $K$ and $L$. Let
		
		\smallskip
		\noindent i) the filtrations are $\epsilon$-interleaved via the interleaving maps
		$$
		\phi_t : K_t \to L_{t+\epsilon}, \quad 
		\psi_t : L_t \to K_{t+\epsilon},
		$$
		which are homotopy equivalences, and
		
		\smallskip
		\noindent ii) the interleaving maps are compatible with the filtrations up to homotopy, i.e.,
		for all $s \le t$ in $I$,
		\begin{equation*}
			\phi_t \circ \iota^K_{s,t} \simeq \iota^L_{s+\epsilon,t+\epsilon} \circ \phi_s.
		\end{equation*}
		\noindent
		Then, the persistent torsion cocycles $\tau^K$ and $\tau^L$ define the same cohomology class, i.e., 
		$$
		\mathcal{T}(K_\bullet) = \mathcal{T}(L_\bullet) \in H^1(P(J);\mathrm{Wh}(G)).
		$$
	\end{thmi}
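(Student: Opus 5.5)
The plan is to reduce the statement to two facts about Whitehead torsion: the composition formula $\tau(g\circ f)=\tau(g)+g_*\tau(f)$, and homotopy invariance of $\tau$. I would then exhibit explicit $0$-cochains on $P(J)$ whose coboundaries account for the difference between $\tau^K$ and $\tau^L$. Throughout I use the description of $\tau^K$ from Definitions \ref{D5.2} and \ref{D5.3}: $\tau^K_{s,t}=\tau(\iota^K_{s,t})$ for $s\le t$ in the torsion-eligible window $J$. All torsions are transported to the common group $\mathrm{Wh}(G)$ via the identifications of fundamental groups fixed in the definition. The cocycle identity $\tau^K_{s,u}=\tau^K_{s,t}+\tau^K_{t,u}$ is exactly the composition formula applied to $\iota^K_{s,u}=\iota^K_{t,u}\circ\iota^K_{s,t}$.

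\emph{Step 1 (intertwining by $\phi$).} Put $c(t)=\tau(\phi_t)\in\mathrm{Wh}(G)$; this is defined because each $\phi_t$ is a homotopy equivalence by (i). Torsion depends only on the homotopy class of a map, so hypothesis (ii) gives $\tau(\phi_t\circ\iota^K_{s,t})=\tau(\iota^L_{s+\epsilon,t+\epsilon}\circ\phi_s)$. Expanding both sides with the composition formula yields
\begin{equation*}
c(t)+\tau^K_{s,t}=\tau^L_{s+\epsilon,t+\epsilon}+c(s).
\end{equation*}
Hence the shifted cocycle $(s,t)\mapsto\tau^L_{s+\epsilon,t+\epsilon}$ equals $\tau^K+\delta c$, where $(\delta c)(s,t)=c(t)-c(s)$. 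So the two are cohomologous.

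\emph{Step 2 (shift invariance).} Put $d(t)=\tau^L_{t,t+\epsilon}$. Applying the cocycle identity to $\iota^L_{s,t+\epsilon}$ factored in the two ways $s\le t\le t+\epsilon$ and $s\le s+\epsilon\le t+\epsilon$ gives
\begin{equation*}
\tau^L_{s,t}+d(t)=d(s)+\tau^L_{s+\epsilon,t+\epsilon}.
\end{equation*}
So the shifted cocycle equals $\tau^L+\delta d$. Combining with Step 1 gives $\tau^K=\tau^L+\delta(d-c)$, hence $\mathcal{T}(K_\bullet)=\mathcal{T}(L_\bullet)$ in $H^1(P(J);\mathrm{Wh}(G))$.

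\emph{Main obstacle.} The delicate step is bookkeeping rather than algebra. First, the indices $s+\epsilon$ and $t+\epsilon$ must lie where the torsions are defined, i.e.\ inclusions into $L_{t+\epsilon}$ must be homotopy equivalences. Second, the pushforwards $g_*$ in the composition formula must become identities once everything is identified with $\mathrm{Wh}(G)$. I would first try to extract both from the definition of a torsion-eligible window (Definition \ref{D5.3}): all structure maps in $J$ are homotopy equivalences inducing isomorphisms of $\pi_1$ compatible with the fixed identification with $G$. If $J$ is not closed under $+\epsilon$, I would extend the cochains by working on $J\cup(J+\epsilon)$, or apply the argument to the restriction, noting that $\phi_t$ being a homotopy equivalence forces eligibility at the shifted levels. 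I would also check that the $\pi_1$-identifications are compatible with the $\phi_t$, so that $\phi_{t*}$ acts trivially on $\mathrm{Wh}(G)$.
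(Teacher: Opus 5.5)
Your Step 1 is the paper's proof: homotopy invariance and the composition formula applied to (ii), with the same $0$-cochain $t\mapsto\tau(\phi_t)$ (the paper's $a_t$). The paper stops there. So what it actually shows is that $\tau^K$ is cohomologous to the \emph{shifted} cocycle $(s,t)\mapsto\tau^L_{s+\epsilon,t+\epsilon}$, and it then tacitly identifies that class with $\mathcal{T}(L_\bullet)$. Your Step 2, which compares the two factorizations of $\iota^L_{s,t+\epsilon}$, supplies exactly this missing comparison, and your algebra $\tau^K=\tau^L+\delta(d-c)$ is right.

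The loose end you flag is real, though. $d(t)=\tau^L_{t,t+\epsilon}$ needs $\iota^L_{t,t+\epsilon}$ to be a homotopy equivalence. Hypothesis (ii) together with $\phi_s$ being a homotopy equivalence only gives this for the shifted inclusions $\iota^L_{s+\epsilon,t+\epsilon}$. So your remark about ``eligibility at the shifted levels'' does not make $J\cup(J+\epsilon)$ eligible. The missing fact does follow from Definition \ref{D2.1}. First, $\iota^L_{u,u+2\epsilon}\simeq\phi_{u+\epsilon}\circ\psi_u$ is a homotopy equivalence for every $u$. Then apply two-out-of-six (if $g\circ f$ and $h\circ g$ are homotopy equivalences, so are $f$, $g$, $h$) to $L_t\to L_{t+\epsilon}\to L_{t+2\epsilon}\to L_{t+3\epsilon}$; this gives the claim for $\iota^L_{t,t+\epsilon}$.

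None of this bookkeeping actually matters for the stated conclusion. The same goes for the question of whether $(\phi_t)_*$ acts as the identity on $\mathrm{Wh}(G)$. The paper asserts this, but it is not automatic: relative to identifications made through the inclusions, $\phi_t$ can induce a non-inner automorphism of $G$. The reason none of it matters is that the target group is zero.

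Since $J$ is an interval, $P(J)$ is totally ordered, and every $\tau$ satisfying $\tau_{s,u}=\tau_{s,t}+\tau_{t,u}$ is a coboundary. Fix $t_0\in J$ and put $a(t)=\tau_{t_0,t}$ for $t\ge t_0$ and $a(t)=-\tau_{t,t_0}$ for $t<t_0$. Applying the cocycle identity in the three cases $t_0\le s\le t$, $s<t_0\le t$, and $s\le t<t_0$ gives $\tau_{s,t}=a(t)-a(s)$. Hence $H^1(P(J);\mathrm{Wh}(G))=0$, and $\mathcal{T}(K_\bullet)=\mathcal{T}(L_\bullet)=0$ for any filtrations on which $J$ is eligible, with no use of the interleaving at all. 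Your argument and the paper's are therefore correct but do more than the statement requires.
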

	
	\textbf{Acknowledgements:}	
	Both the authors thank the DAE-Apex project ``Complex Algebraic Geometry" for their support. The second named author thanks Max-Planck Institute for Mathematics, Bonn for their hospitality and support during the course of this work.

	\section{Preliminaries}\label{S2}
	\noindent
 In this section, we recall some basic definitions and results on persistent homology and discrete Morse theory for simplicial complexes from \cite{ZC05} and \cite{RB98, RB02}.
	
	\smallskip
	\noindent A simplicial complex consists of a set $K$ together with a collection $\mathcal{S}$ of subsets of $K$, called simplices, such that:
	
	\smallskip
	i) for every $v \in K$, the singleton $\{v\}$ belongs to $\mathcal{S}$; and
	
	\smallskip
	ii) whenever $\sigma \in \mathcal{S}$ and $\tau \subseteq \sigma$, then $\tau \in \mathcal{S}$.
	
	\smallskip
	\noindent
	The singletons $\{v\}$ are called the vertices of the complex.
	An element $\sigma \in \mathcal{S}$ is called a $k$-simplex (of dimension $k$) if $|\sigma| = k+1$ and we will sometimes denote it by $\sigma^k$.  
	If $\tau \subseteq \sigma$, then $\tau$ is called a face of $\sigma$, and $\sigma$ is called a coface of $\tau$. An orientation of a $k$-simplex $\sigma^k = \{v_0, \dots, v_k\}$ is defined as an equivalence class of orderings of its vertices, where two orderings $(v_0, \dots, v_k)$ and $(v_{\tau(0)}, \dots, v_{\tau(k)})$ are considered equivalent if the permutation $\tau$ has sign $+1$. An oriented simplex is denoted by $[\sigma]$. 
	A subcomplex of a simplicial complex $K$ is a subset $L \subseteq K$ that itself forms a simplicial complex.

	\smallskip
	\noindent A filtration of $K$ consists of an increasing chain of subcomplexes
	$$
	\emptyset = K_0 \subseteq K_1 \subseteq \cdots \subseteq K_m = K.
	$$
	For convenience, we assume $K_i = K_m$ for all $i \ge m$.
	A simplicial complex endowed with such a sequence is called a filtered simplicial complex. A map $f : K \to L$ between filtered simplicial complexes is called a filtration map if it preserves the filtration, i.e.,
	$
	f(K_i) \subseteq L_i$ for all $i.
	$
	In particular, $f$ restricts to maps
	$$
	f_i : K_i \to L_i
	$$
	for each $i$, and these are compatible with the inclusions.
	
	\smallskip
	\noindent
	\subsection{Persistent Homology.}
	
	\smallskip
	\noindent Given a filtered complex $\emptyset=K_0 \subseteq K_1 \subseteq \cdots \subseteq K_n = K,$ for each subcomplex $K_i$ of $K$ there exists a chain complex
	$$
	C_\bullet(K_i): \quad \cdots \longrightarrow C_r(K_i) \xrightarrow{\delta_{i_r}} C_{r-1}(K_i) \longrightarrow \cdots,
	$$
	where each $C_r(K_i)$ is the free abelian group on its set of oriented $r$-simplices. The map $\delta_{i_r}: C_r(K_i) \longrightarrow C_{r-1}(K_i)$ is the boundary operator defined linearly on chains as: for any $\sigma = [v_0, v_1, \dots, v_r]\in  C_r(K_i) $,
	$$\delta_{i_r} \sigma = \sum_{j=0}^{r} (-1)^j 
	[v_0, v_1, \dots, \widehat{v_j}, \dots, v_r],$$
	where $\widehat{v_j}$ denotes that the vertex $v_j$ is omitted from the simplex $\sigma$. Let $Z_{r}(K_i):= \ker(\delta_{i_r})$ be the cycle group and $B_{r}(K_i):=\operatorname{Im}(\delta_{i_{r+1}})$ be the boundary group. Then, the $r$-th homology group of $K_i$ is $$H_r(K_i):=Z_r(K_i)/B_r(K_i).$$
	On computing the homology groups of all the $K_i$, we obtain a sequence 
	$$
	\mathcal H_{\bullet}(K): H_{r}(K_0)\longrightarrow H_{r}(K_1)\longrightarrow\cdots\longrightarrow H_{r}(K_n) ,
	$$
	connected by the morphisms $H_{r}(K_i)\to H_{r}(K_{i+1})$ induced by the inclusion $K_i\hookrightarrow K_{i+1}$.
	The sequence of modules $\mathcal H_{\bullet}(K)$ is called a persistence
	module. More formally, a persistence module over a ring $R$ consists of a sequence of $R$-modules
	$\{M_i\}_{i \ge 0}$ together with homomorphisms
	$ \varphi_i : M_i \longrightarrow M_{i+1}.$
	
	\smallskip
	\noindent
	The $p$-th persistent $k$-th homology group of $K_i$, denoted by $H_k^{i,p}(K)$, is the image of the map $H_k(K_i)\xrightarrow{\eta_{i,p}} H_k(K_{i+p})$ and is also given as (see \cite[\S 2.6, pp 258]{ZC05}) $$H_k^{i,p}(K):=\frac{Z_k(K_i)}{B_k(K_{i+p})\cap Z_k(K_i)}.$$
	Note: Unless otherwise specified, all homology groups are taken with integer coefficients, and we write $H_k(K)$ in place of $H_k(K;\mathbb{Z})$.
	\smallskip
	\noindent
	\subsection{$\epsilon$-interleaving of simplicial complex} We recall that a continuous map $f:X \to Y$ is a homotopy equivalence if there exists a map $g:Y \to X$ such that $g \circ f \simeq \mathrm{id}_X$ and $f \circ g \simeq \mathrm{id}_Y$, where $\simeq$ denotes the homotopy of maps. In this case, $X$ and $Y$ are said to be homotopy equivalent, and we write $X \simeq Y$.
	\begin{define}[$\epsilon$-interleaving of filtered simplicial complexes]\label{D2.1}
		Let $\{K_i\}_{i\ge 0}$ and $\{L_i\}_{i\ge 0}$ be two filtered simplicial complexes. 
		They are said to be $\epsilon$-interleaved if for $\epsilon \ge 0,$ there exist simplicial maps
		$\phi_i : K_i \rightarrow L_{i+\epsilon}$ and $\psi_i : L_i \rightarrow K_{i+\epsilon}$
		for all $i \ge 0$ such that the compositions
		$\psi_{i+\epsilon}\circ \phi_i : K_i \rightarrow K_{i+2\epsilon}$ and 
		$\phi_{i+\epsilon}\circ \psi_i : L_i \rightarrow L_{i+2\epsilon}$
		are homotopic to the natural inclusion maps
		$\iota^K_{i,i+2\epsilon}:K_i \hookrightarrow K_{i+2\epsilon}$ and $\iota^L_{i,i+2\epsilon}:L_i \hookrightarrow L_{i+2\epsilon}$, respectively.
	\end{define}

	\begin{define}[$\epsilon$-interleaving for persistent modules](see \cite[pp9, \S 2.2]{ML})
		Let $\{K_i\}_{i\ge 0}$ and $\{L_i\}_{i\ge 0}$ be two filtered simplicial complexes,
		and let $\{H_{\ast}(K_i)\}$ and $\{H_{\ast}(L_i)\}$ be their associated persistence	homology modules. For $\epsilon \ge 0$, the two persistence modules are said to be $\epsilon$-interleaved if there exist families of homomorphisms
		$$
		\phi_i : H_{\ast}(K_i) \longrightarrow H_{\ast}(L_{i+\epsilon}),
		\qquad
		\psi_i : H_{\ast}(L_i) \longrightarrow H_{\ast}(K_{i+\varepsilon}),
		$$
		such that for every $i$, the compositions
		$$
		\psi_{i+\epsilon} \circ \phi_i=\eta_{K_{i,2\epsilon}}
		\quad \text{and} \quad
		\phi_{i+\epsilon} \circ \psi_i=\eta_{L_{i,2\epsilon}},
		$$
		where
		$$
		\eta_{K_{i,2\epsilon}}: H_{\ast}(K_i) \longrightarrow H_{\ast}(K_{i+2\epsilon})
		\quad \text{and} \quad
		\eta_{L_{i,2\epsilon}}: H_{\ast}(L_i) \longrightarrow H_{\ast}(L_{i+2\epsilon})
		$$
		are the homomorphisms induced by the inclusion $K_i\hookrightarrow K_{i+2\epsilon}$ and $L_i\hookrightarrow L_{i+2\epsilon}$, respectively. 
	\end{define}
	
	\subsection{Discrete Morse Theory for simplicial complex} In this subsection, we recall the classical discrete Morse theory for a finite simplicial complex as introduced by Forman in \cite{RB98, RB02}. From now onwards, we will always consider finite simplicial complexes.
	
	\begin{define}[Elementary collapse]
		Let $K$ and $L$ be finite simplicial complexes. An elementary collapse from $K$ to $L$ occurs if there exist simplices
		$\sigma,\tau \in K$ such that:
		
		\smallskip
		i) $\sigma$ is a proper face of $\tau$, and
		
		\smallskip
		ii) $\tau$ is the unique simplex of $K$ properly containing $\sigma$.
		
		\smallskip
		\noindent In this case   $L = K \setminus \{\sigma,\tau\}$ and $\sigma$ is called a \emph{free face} of $\tau$.
	\end{define}
	\noindent A collapse, denoted by $K \searrow L$, is a sequence
	\[
	K = K_n \searrow_e K_{n-1} \searrow_e \cdots \searrow_e K_0 = L
	\]
	of elementary collapses from $K$ to $L$. The inverse of this operation is called an expansion and is denoted by $L \nearrow K$.
	A subcomplex $L$ obtained by a collapse is called a weak core of $K$.
	\begin{define}[Simple homotopy equivalence]
		Let $K$ and $L$ be finite simplicial complexes. A map $f : K \longrightarrow L$ is called a simple homotopy equivalence if it is homotopic to a finite composition
		$$
		K = K_n \xrightarrow{f_n} K_{n-1} \xrightarrow{f_{n-1}} K_{n-1} \longrightarrow \cdots \xrightarrow{f_0} K_0 = L,
		$$
		where each $f_i$ is either an elementary expansion or an elementary collapse. In this case, $K$ and $L$ are said to be simple-homotopy equivalent.
	\end{define}
	\begin{define}[Discrete vector field] Let $K$ be a simplicial complex. A discrete vector field $V$ on $K$ is a collection of pairs 	$(\sigma^{(r)},\tau^{(r+1)})$ such that $\sigma$ is a face of $\tau$ and each simplex of $K$ belongs 
		to at most one such pair. Such a collection of pairs is also referred to as a matching on the simplices of $K$.
		Let $V$ be a discrete vector field on $K$. A $V$-path is a sequence of simplices
		$$\sigma^{r}_0, \tau^{r+1}_0,
		\sigma^{r}_1, \tau^{r+1}_1, \dots,
		\tau^{r+1}_p, \sigma^{r}_{p+1},$$
		such that for each $i = 0, \dots, p$,
		$$(\sigma^{r}_i, \tau^{r+1}_i) \in V
		\quad \text{and} \quad
		\tau^{r+1}_i \supset \sigma^{r}_{i+1} \neq \sigma^{r}_i.
		$$
		
	\end{define}
	\noindent
	Note that the last simplex does not need to belong 
	to any pair in $V$. If $r=0$, the path is said to be trivial. A $V$-path is said to be closed if $\sigma^{(r)}_k=\sigma^{(r)}_0$.
	
	\begin{remark}
		The process of computing a minimal weak core of a simplicial complex is quite restrictive as many complexes may have no free faces, and different collapse sequences may produce non-isomorphic weak cores. 
		Discrete Morse theory offers a combinatorial method for reducing simplicial complexes without altering their homotopy type. 
	\end{remark}
	\begin{define}[Discrete Morse function]
		A function $f : K \longrightarrow \mathbb{R}$ defined on the simplices of a simplicial complex $K$ is called a discrete Morse function if for every simplex $\sigma^r \in K$, the set
		$$
		\widetilde{M}(\sigma)
		=
		\{\tau^{r-1} \subset \sigma^r : f(\tau) \ge f(\sigma)\}
		\cup
		\{\tau^{r+1} \supset \sigma^r : f(\tau) \le f(\sigma)\}
		$$
		contains at most one element.
	\end{define}
	\begin{define}[Critical simplex]
		A simplex $\sigma^r\in K$ is called critical if $\widetilde{M}(\sigma)=\varnothing$.
	\end{define}
	\noindent Let $\mathcal K$ be the partially ordered set of simplices of $K$ ordered by face relation. It is referred to as the face poset of $K$. Every discrete Morse function determines a matching on $\mathcal K$ by pairing simplices $\sigma$ and $\tau$ whenever
	$\widetilde{M}(\sigma)=\{\tau\}$. It is well known that such matchings are acyclic and conversely, acyclic matchings correspond to discrete Morse functions (see \cite{CH, XF}).
	
	\smallskip
	\noindent A discrete Morse function can also be realized as a gradient vector field, in which non-critical simplices are paired with each other. 
	\begin{define}[Gradient vector field]
		Let $f$ be a discrete Morse function on a simplicial complex $K$.
		The gradient vector field $\widetilde{V}$ associated with $f$ is the
		collection of pairs $$\widetilde{V}:=\{(\sigma^{r}, \tau^{r+1})~|~\sigma^{r} \subset \tau^{r+1}~\textup{and}~ f(\sigma) \ge f(\tau)\}.$$
		Such a pair is called a gradient pair. 
	\end{define}
	\noindent
	Note that a simplex belongs to the gradient vector field
	if and only if it is non-critical.
	Naturally, every gradient vector field arising from a discrete Morse function is a discrete vector field. For converse, we have the following theorem.
	
	\begin{theorem}(see \cite[Theorem 3.5]{RB02}, \cite[Theorem 2.5.1]{SN19})\label{T2.10}
		A discrete vector field $V$ is the gradient vector field of a discrete Morse function if and only if 
		it contains no non-trivial closed $V$-paths.
	\end{theorem}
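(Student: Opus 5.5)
We prove the two implications separately. Throughout, we use the matching induced by a discrete Morse function, as recalled before Definition of the gradient vector field.

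\emph{($\Rightarrow$) A gradient field has no closed $V$-paths.} Let $V=\widetilde V$ be the gradient vector field of a discrete Morse function $f$. The plan is to show that $f$ strictly decreases along the $\sigma$'s of any $V$-path.

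Take a gradient pair $(\sigma^r,\tau^{r+1})$, so $f(\sigma)\ge f(\tau)$. Then $\sigma\in \widetilde M(\tau)$. Since $\widetilde M(\tau)$ has at most one element, every other codimension-one face $\sigma'\neq\sigma$ of $\tau$ satisfies $f(\sigma')<f(\tau)$.

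Along a $V$-path $\sigma_0,\tau_0,\sigma_1,\dots$ this gives
$$
f(\sigma_i)\ge f(\tau_i)>f(\sigma_{i+1}).
$$
Hence $f(\sigma_0)>f(\sigma_k)$ for every $k\ge 1$. So a non-trivial path cannot close up with $\sigma_k=\sigma_0$.

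\emph{($\Leftarrow$) An acyclic discrete vector field is a gradient.} Suppose $V$ has no non-trivial closed $V$-paths. Form the directed graph $\Gamma_V$ on the face poset $\mathcal K$ as follows. For each codimension-one face relation $\sigma^r\subset\tau^{r+1}$:
\begin{itemize}
\item if $(\sigma,\tau)\notin V$, draw an edge $\tau\to\sigma$ (pointing down);
\item if $(\sigma,\tau)\in V$, draw an edge $\sigma\to\tau$ (pointing up).
\end{itemize}

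\emph{Key step: $\Gamma_V$ is acyclic.} Suppose $\Gamma_V$ had a directed cycle. Each edge changes dimension by exactly $\pm1$, so the cycle has equally many up-steps and down-steps.

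Consider an up-step $\sigma\to\tau$. It is a matched pair, and $\tau$ lies in at most one pair of $V$. So $\tau$ has no outgoing up-edge, and the next step must be a down-step. Thus no two up-steps are consecutive, and the cycle splits into blocks of the form $U D^{a_j}$ with each $a_j\ge1$.

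Because the number of up-steps equals the number of down-steps, $\sum_j a_j$ equals the number of blocks. Hence every $a_j=1$. The cycle therefore alternates between two consecutive dimensions $r$ and $r+1$, going up only along pairs of $V$ and down to a different face each time. This is exactly a non-trivial closed $V$-path, which contradicts the hypothesis. This is the step I expect to need the most care.

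\emph{Constructing $f$.} Since $K$ is finite and $\Gamma_V$ is acyclic, choose a topological order of $\Gamma_V$. Define $f(\alpha)$ to be $N$ minus the position of $\alpha$ in that order, where $N=|\mathcal K|$. Then $f$ strictly decreases along every edge of $\Gamma_V$. Concretely:
\begin{itemize}
\item for unmatched pairs $\sigma\subset\tau$ we get $f(\sigma)<f(\tau)$;
\item for matched pairs we get $f(\sigma)>f(\tau)$.
\end{itemize}

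Consequently, for each simplex $\alpha$, the set $\widetilde M(\alpha)$ consists exactly of its $V$-partner, if it has one. Since each simplex lies in at most one pair, $|\widetilde M(\alpha)|\le1$, so $f$ is a discrete Morse function.

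Its gradient pairs are exactly the pairs of $V$. In the definition of the gradient vector field, the inequality $f(\sigma)\ge f(\tau)$ among codimension-one face pairs holds precisely for the matched ones. Hence $\widetilde V=V$, which completes the proof.
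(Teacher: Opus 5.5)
Your proof is correct and takes the standard route the paper relies on: it proves the theorem only by citation to Forman and Scoville, but in Section~4 it recalls the Hasse-diagram reformulation of Forman \S6 and Chari, which is exactly your argument. For the forward direction you use the strict decrease $f(\sigma_i)\ge f(\tau_i)>f(\sigma_{i+1})$; for the converse you show the modified Hasse diagram is acyclic and take a topological sort, and your block-counting step ($UD^{a_j}$ with every $a_j=1$) supplies the detail the paper only asserts, namely that directed cycles in that diagram are precisely non-trivial closed $V$-paths.
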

	\begin{theorem}\cite[Theorem 2.5]{RB02}
		Let $K$ be a finite simplicial complex equipped with a discrete Morse function. Then $K$ is homotopy equivalent to a CW complex having exactly one $p$-cell for each critical simplex of dimension $p$.
	\end{theorem}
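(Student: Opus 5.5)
The plan is to build $K$ one step at a time, following a total order of its simplices that is compatible with the gradient vector field of the given discrete Morse function $f$. In each step we add either one gradient pair or one critical simplex. Adding a gradient pair is an elementary expansion, so it does not change the homotopy type. Adding a critical $p$-simplex amounts, up to homotopy, to attaching a $p$-cell. An induction on the number of steps then produces the required CW complex.

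First, let $\widetilde V$ be the gradient vector field of $f$. By the discussion preceding Theorem~\ref{T2.10}, $\widetilde V$ is an acyclic matching, and each simplex lies in at most one pair. Form the directed graph on the elements of $\mathcal K$ as follows: draw an edge $\tau\to\sigma$ for each codimension-one face $\sigma\subset\tau$, and reverse this edge whenever $(\sigma,\tau)\in\widetilde V$. Because there are no non-trivial closed $V$-paths (Theorem~\ref{T2.10}), this graph has no directed cycles. Now contract each matched pair to a single node. I expect the main obstacle to be checking that the contracted graph is still acyclic. A directed cycle through contracted nodes would lift to a cycle in the original graph that alternates down-steps and matched up-steps. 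I would show that any such cycle must stay in two adjacent dimensions and is therefore a closed $V$-path. The reason is that a directed path can climb in dimension only along a matched edge, and each climb is by one dimension; so if the cycle descended more than it climbed, it could not return to its starting simplex. A topological sort of the contracted graph then gives a sequence $\emptyset=X_0\subset X_1\subset\cdots\subset X_n=K$ of subcomplexes. Each $X_{j+1}$ is $X_j$ together with either a single critical simplex, all of whose faces already lie in $X_j$, or a pair $(\sigma,\tau)\in\widetilde V$ whose faces other than $\sigma$ already lie in $X_j$.

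Second, in the pair case, $\sigma$ is a free face of $\tau$ in $X_{j+1}$, since every other coface of $\sigma$ comes later in the order. Hence $X_{j+1}\searrow X_j$ is an elementary collapse, and the inclusion $X_j\hookrightarrow X_{j+1}$ is a homotopy equivalence. In the critical case, $X_{j+1}=X_j\cup_{\partial\sigma} \sigma$ is $X_j$ with a $p$-cell attached along $\partial\sigma\cong S^{p-1}$.

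Third, we induct to build CW complexes $Y_j$ together with homotopy equivalences $h_j:X_j\to Y_j$, where $Y_j$ has one $p$-cell for each critical $p$-simplex in $X_j$. In the pair case we set $Y_{j+1}=Y_j$ and compose $h_j$ with a homotopy inverse of the collapse. In the critical case we take $Y_{j+1}=Y_j\cup_{g} e^p$, where $g$ is a cellular approximation of $h_j|_{\partial\sigma}$; cellular approximation keeps $Y_{j+1}$ a CW complex. We then use the standard facts that attaching a cell along homotopic maps gives homotopy equivalent spaces, and that a homotopy equivalence $X_j\to Y_j$ extends to one $X_j\cup e^p\to Y_j\cup_{h_j\circ\partial} e^p$; the latter follows from the gluing lemma for homotopy equivalences of cofibration pushouts. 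Finally, $Y_n$ has exactly one $p$-cell for each critical $p$-simplex, which proves the statement.
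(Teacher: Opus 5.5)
The paper gives no proof of this statement; it quotes it from Forman. Forman's argument orders $K$ by the values of $f$. He works with the level subcomplexes $K(c)$, the closures of $\{\sigma : f(\sigma)\le c\}$, and uses the statements the paper records as the Discrete Morse Lemma: $K(b)\searrow K(a)$ across an interval with no critical value, and $K(b)$ is, up to homotopy, $K(a)$ with a $p$-cell attached across an interval containing a single critical $p$-simplex. The CW model is then assembled by the same Milnor-style induction you give: cellular approximation, attaching along homotopic maps, and extending a homotopy equivalence over a cell.

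Your route differs only in where the ordering comes from. You discard the values of $f$ and use only that $\widetilde V$ is an acyclic matching. From this you produce a linear extension of the face poset in which matched pairs are consecutive, as in Chari's formulation. This gives the result for arbitrary acyclic matchings and makes every $X_j$ automatically a subcomplex. Ordering by $f$ forces the passage to closures: since $f(\sigma)\ge f(\tau)$ for a gradient pair, $\{f\le c\}$ can contain $\tau$ without its face $\sigma$. Forman's route gets the ordering for free and describes the sublevel filtration of $f$ itself. That is the more natural object when $f$ is meant to be compatible with a given filtration, as in this paper.

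The price of your route is the acyclicity of the contracted graph, and there your sketch argues in the wrong order. A cycle of nodes need not lift naively. It may enter a pair node $\{\sigma,\tau\}$ at $\tau$ (from a coface of $\tau$) and leave from $\sigma$ (to a face of $\sigma$), and there is no directed edge $\tau\to\sigma$.

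The fix is to do your dimension count on the contracted cycle itself. Suppose it passes through $k$ nodes.
\begin{itemize}
\item Each of the $k$ connecting edges lowers dimension by one.
\item Inside a node, the change from entry to exit is $+1$ (in at $\sigma$, out at $\tau$), $0$, or $-1$ (in at $\tau$, out at $\sigma$).
\item Since the net change is zero, the number of $+1$ nodes equals $k$ plus the number of $-1$ nodes.
\item As there are only $k$ nodes, every node is a pair entered at its lower simplex and left at its upper one.
\end{itemize}
The cycle then lifts to $\sigma_1,\tau_1,\sigma_2,\dots,\tau_k,\sigma_1$. Here $k\ge 2$ and $\sigma_{i+1}\ne\sigma_i$, because quotient edges join distinct nodes. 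This is a non-trivial closed $V$-path, contradicting acyclicity.

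Also use the topological order reversed, since your edges point from cofaces to faces. With these repairs the proof is complete.
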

	\begin{theorem}[Weak Morse Inequalities]\cite[Theorem 2.11]{RB02}\label{T2.12}
		Let $K$ be a simplicial complex equipped with a discrete Morse function. 
		For each $p$, let $c_p$ denote the number of critical $p$-simplices. 
	Let $\mathbb{F}$ be a coefficient field and let $b_p:=\dim H_p(K;\mathbb F)$ denote the $p$-th Betti number of $K$. Then the following statements hold:
		
		\smallskip
		\noindent
		i) For each $p=0,1,2,\dots,n$, where $n=\dim K$,
		$$
		c_p \ge b_p.
		$$
		
		\smallskip
		\noindent
		ii) The alternating sums of the critical simplices and Betti numbers agree:
		$$
		c_0 - c_1 + c_2 - \cdots + (-1)^n c_n
		=
		b_0 - b_1 + b_2 - \cdots + (-1)^n b_n
		= \chi(K),
		$$
		where $\chi(K)$ denotes the Euler characteristic of $K$.
	\end{theorem}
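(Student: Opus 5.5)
The plan is to deduce both statements from the preceding theorem \cite[Theorem 2.5]{RB02}, which says $K$ is homotopy equivalent to a CW complex with one $p$-cell per critical $p$-simplex. Once that is in hand, only linear algebra over $\mathbb F$ remains.

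\textbf{Step 1 (pass to a CW model).} Fix the discrete Morse function on $K$. By the preceding theorem there is a CW complex $X$ with $K\simeq X$ such that $X$ has exactly $c_p$ cells in dimension $p$, for every $p$. Since $X$ is built from the critical simplices of $K$, it has no cells above dimension $n=\dim K$. Homology is a homotopy invariant, so $H_p(K;\mathbb F)\cong H_p(X;\mathbb F)$ and hence $b_p=\dim H_p(X;\mathbb F)$. Cellular homology agrees with singular homology, so $H_p(X;\mathbb F)$ is computed by the cellular chain complex
\[
0\to C^{\mathrm{cell}}_n(X;\mathbb F)\xrightarrow{\partial_n}\cdots\xrightarrow{\partial_1} C^{\mathrm{cell}}_0(X;\mathbb F)\to 0 ,
\]
where $\dim_{\mathbb F} C^{\mathrm{cell}}_p(X;\mathbb F)=c_p$.

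\textbf{Step 2 (the inequality $c_p\ge b_p$).} Write $Z_p=\ker\partial_p$ and $B_p=\operatorname{Im}\partial_{p+1}$ inside $C_p:=C^{\mathrm{cell}}_p(X;\mathbb F)$. Then $b_p=\dim Z_p-\dim B_p\le \dim Z_p\le \dim C_p=c_p$, which proves i).

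\textbf{Step 3 (the alternating sum).} By rank--nullity, $c_p=\dim Z_p+\dim B_{p-1}$, with $B_{-1}=0$ and $B_n=0$. Substituting into $\sum_p(-1)^p c_p$, the $B$-terms telescope and we obtain
\[
\sum_p(-1)^p c_p=\sum_p(-1)^p(\dim Z_p-\dim B_p)=\sum_p(-1)^p b_p .
\]
The same computation applied to the simplicial chain complex of $K$ identifies $\sum_p(-1)^p b_p$ with $\chi(K)=\sum_p(-1)^p\#\{p\text{-simplices}\}$. This proves ii).

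The only substantive input, and the step I expect to be the real obstacle, is Step 1: the homotopy equivalence $K\simeq X$ together with the identification of $H_*(X;\mathbb F)$ with cellular homology. Everything after that is routine linear algebra.

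If one wants to avoid CW topology, I would instead build Forman's Morse complex directly. This is a chain complex generated by the critical simplices, with boundary given by counting gradient $V$-paths, where acyclicity is guaranteed by Theorem \ref{T2.10}. One then shows it is chain homotopy equivalent to $C_\bullet(K;\mathbb F)$ by performing the algebraic cancellation of one gradient pair at a time. Steps 2 and 3 then apply verbatim to this complex.
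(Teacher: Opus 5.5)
Your proof is correct and follows the standard route. The paper gives no proof of its own here; it quotes Forman \cite[Theorem 2.11]{RB02}, where the weak Morse inequalities come from the CW model of \cite[Theorem 2.5]{RB02} exactly as you derive them, using cellular homology with field coefficients and rank--nullity (your Morse-complex alternative is the other standard derivation).
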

	\noindent 
	We now consider the filtration of $K$ by sublevel complexes
	$$ K_\alpha := \{\sigma \in K : f(\sigma) \le \alpha\}$$
	induced by the discrete Morse function $f$. The change in homotopy type across the filtration is characterized by the following result.
	
	\begin{lemma}[Discrete Morse Lemma]\cite[Theorem 3.3 and 3.4]{RB98}
		Let $f : K \longrightarrow \mathbb{R}$ be a discrete Morse function. Then the following statements hold.
		
		\smallskip
		i) If the interval $(\alpha,\beta]$ contains no critical simplices,
		then $K_\beta \searrow K_\alpha$.
		
		\smallskip
		ii) If the interval $(\alpha,\beta]$ contains exactly one critical
		$k$-simplex, then $K_\beta$ is obtained from $K_\alpha$
		by attaching a $k$-cell.
	\end{lemma}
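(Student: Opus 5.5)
We follow Forman's original argument. The basic idea is to reduce to intervals $(\alpha,\beta]$ that contain exactly one value of $f$, and then to analyse the single step. Two preliminary adjustments are needed.

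First, $\{\sigma : f(\sigma)\le\alpha\}$ need not be closed under faces. Following Forman, we interpret $K_\alpha$ as the level subcomplex $\bigcup_{f(\sigma)\le\alpha}\overline{\sigma}$, the union of all simplices with value at most $\alpha$ together with all their faces.

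Second, we perturb $f$ by a small amount so that it becomes injective. Each $\widetilde{M}(\sigma)$ is defined by finitely many strict or weak inequalities. So a sufficiently small perturbation, chosen also to keep $f(\sigma)>f(\tau)$ for each gradient pair $(\tau,\sigma)$, preserves the gradient vector field and the set of critical simplices. It also leaves the level subcomplexes $K_\alpha$ and $K_\beta$ unchanged, provided no simplex takes the value $\alpha$ or $\beta$ exactly; if one does, we shift the endpoints slightly. After subdividing $(\alpha,\beta]$, we may therefore assume that the interval contains exactly one value $c=f(\rho)$ with $\rho\in K$.

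The key combinatorial step is to determine $K_\beta\setminus K_\alpha$. Every simplex in this set is a face of $\rho$ that lies in no simplex of value $\le\alpha$.

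We first show that at most one proper face of $\rho$ can lie outside $K_\alpha$. Let $\nu$ be a codimension-one face of $\rho$ with $f(\nu)<f(\rho)$; then $\nu\in K_\alpha$. A face of codimension $\ge2$ lies in at least two codimension-one faces of $\rho$, and at most one of these can fail the inequality $f(\nu)<f(\rho)$. Hence such a face lies in some codimension-one face $\nu$ of value $<f(\rho)$, and so it is already in $K_\alpha$.

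This leaves two cases.

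\emph{$\rho$ is critical.} Then every face of $\rho$ has smaller value, so $\partial\rho\subset K_\alpha$. Hence $K_\beta=K_\alpha\cup\rho$ is obtained by attaching a $k$-cell along $\partial\rho$, which gives (ii).

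\emph{$\rho$ is regular.} If $\rho$ is paired with a coface $\tau$, then $f(\tau)\le f(\rho)$, and by injectivity $f(\tau)<f(\rho)$. So $\tau\in K_\alpha$, hence $\rho\in K_\alpha$, and $K_\beta=K_\alpha$. Otherwise $\rho$ is paired with a face $\sigma$ satisfying $f(\sigma)>f(\rho)$. Then $K_\beta=K_\alpha\sqcup\{\rho,\sigma\}$, provided $\sigma\notin K_\alpha$, and $\rho$ is then the unique coface of $\sigma$ in $K_\beta$. Indeed, any other coface $\tau'\supset\sigma$ with $\tau'\in K_\beta$ would have to satisfy $f(\tau')>f(\sigma)$, since $\widetilde M(\sigma)=\{\rho\}$ is already used. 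Such a $\tau'$ lies in $K_\beta$ only as a face of some simplex $\omega$ with $f(\omega)\le c<f(\tau')$. We then need a short descending-chain argument through intermediate faces, using the Morse condition at each level. It shows that this forces $\sigma$ into $K_\alpha$, or contradicts the Morse condition. Granting this, $\sigma$ is a free face of $\rho$, and $K_\beta\searrow_e K_\alpha$.

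Concatenating these single steps over a subdivision of $(\alpha,\beta]$ proves (i). For (ii), the steps before and after the critical value are collapses, so $K_\beta$ is obtained from $K_\alpha$ by attaching one $k$-cell, up to homotopy equivalence (as in Forman) and in fact up to collapses and expansions.

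The main obstacle is the freeness claim for $\sigma$ in the regular case: ruling out other cofaces of $\sigma$ already present in $K_\beta$. I would handle it using the standard fact that a simplex cannot simultaneously have an "exceptional" face and an "exceptional" coface (Forman's Lemma 2.5), applied inductively along chains of faces.
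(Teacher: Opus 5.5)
\textbf{Gap: the proposal never proves $\sigma\notin K_\alpha$.} Your strategy is Forman's own argument for Theorems 3.3--3.4 in \cite{RB98}, which the paper only cites: perturb to an injective $f$, subdivide so each interval contains a single value $c=f(\rho)$, show that only $\rho$ and possibly its exceptional facet can lie in $K_\beta\setminus K_\alpha$, and split into cases. The bookkeeping steps are fine. The gap is in the one step that carries the real content. You write ``$K_\beta=K_\alpha\sqcup\{\rho,\sigma\}$, provided $\sigma\notin K_\alpha$'' and later ``granting this''. But $\sigma\notin K_\alpha$ is not an optional proviso.

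\begin{itemize}
\item By your own reduction, any coface $\tau'\supsetneq\sigma$ in $K_\beta=K_\alpha\cup\overline{\rho}$ is either $\rho$ or lies in $K_\alpha$. So freeness of $\sigma$ reduces exactly to $\sigma\notin K_\alpha$.
\item If $\sigma\in K_\alpha$ were possible, then $K_\beta=K_\alpha\cup\rho$ with $\partial\rho\subset K_\alpha$ would be a cell attachment at a non-critical value, and (i) would fail.
\item In the critical case you likewise need $\rho\notin K_\alpha$; otherwise $K_\beta=K_\alpha$ and no cell is attached. You never check this.
\end{itemize}

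Neither fact is automatic, because $f$ need not increase along face relations of codimension $\ge 2$. For example, a vertex paired with an edge can have larger value than a triangle containing it.

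\textbf{The missing monotonicity lemma.} Your hint (Lemma 2.5 applied inductively) does give it once it is written out.

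(a) Suppose $\rho$ has no coface $\tau^{(k+1)}$ with $f(\tau)\le f(\rho)$. This holds if $\rho$ is critical, or if $\rho$ is paired with a face (by Lemma 2.5, or directly from $|\widetilde M(\rho)|\le 1$). Then $f(\omega)>f(\rho)$ for every $\omega\supsetneq\rho$. Induct on $r=\dim\omega-\dim\rho$.
\begin{itemize}
\item For $r=1$ this is the hypothesis.
\item For $r\ge 2$, $\rho$ lies in $r\ge 2$ facets of $\omega$, and at most one of them has value $\ge f(\omega)$. Choose a facet $\gamma\supsetneq\rho$ with $f(\gamma)<f(\omega)$ and apply induction to $\rho\subsetneq\gamma$.
\end{itemize}

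(b) If $(\sigma,\rho)$ is a gradient pair and $\omega\supsetneq\sigma$ with $\rho\not\subseteq\omega$, then $f(\omega)>f(\sigma)$. The proof is the same induction: the facets of $\omega$ through $\sigma$ also avoid $\rho$, and the base case is $\widetilde M(\sigma)=\{\rho\}$.

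\textbf{How the lemma closes both cases.}
\begin{itemize}
\item Regular case: combining (a) and (b), every $\omega\supsetneq\sigma$ with $\omega\ne\rho$ has $f(\omega)>f(\rho)=c$. Hence $f(\omega)>\beta$, since $\rho$ is the only simplex with value in $(\alpha,\beta]$. Since also $f(\sigma)>\beta$, the only simplex containing $\sigma$ with value $\le\beta$ is $\rho$. This gives both $\sigma\notin K_\alpha$ and the freeness of $\sigma$ in $K_\beta$.
\item Critical case: applying (a) to a critical $\rho$ gives $\rho\notin K_\alpha$ in case (ii).
\end{itemize}

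With this lemma inserted, your argument goes through.
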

	\noindent
	It follows that the non-critical intervals correspond to collapse phenomena,
	while critical simplices correspond to homotopy changes.
	\subsection{Whitehead torsion and simple-homotopy equivalence.}\label{Sub2.3}
	Let $K$ and $L$ be finite simplicial complexes and let $f:K\longrightarrow L$ be a homotopy equivalence. Whitehead in \cite{JHC} introduced an invariant, the Whitehead torsion $\tau(f)$, defined as an element of the Whitehead group $\textup{Wh}(\pi_1(L))$, where $\pi_1(L)$ denotes the fundamental group of $L$.
	The Whitehead torsion measures the obstruction for a homotopy equivalence to be
	a simple-homotopy equivalence.
	
	\begin{theorem}\cite{JM, JHC}\label{T2.18}
		A homotopy equivalence
		$f:K\longrightarrow L$ between finite simplicial complexes is a simple-homotopy equivalence
		if and only if
		$
		\tau(f)=0.
		$
	\end{theorem}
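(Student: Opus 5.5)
Since Theorem~\ref{T2.18} is the classical theorem of Whitehead (as presented in \cite{JM, JHC}), I would follow the standard route through the mapping cylinder and the cellular chain complex of universal covers. Replace $f$ by the inclusion $K\hookrightarrow M_f$ into the simplicial mapping cylinder. This is legitimate because $M_f\searrow L$ is a collapse, so $f$ is a simple-homotopy equivalence exactly when the inclusion $K\hookrightarrow M_f$ is. Let $G=\pi_1(L)$ and let $\widetilde{M}_f\supset\widetilde{K}$ be the universal covers. The relative cellular chain complex $C_\ast(\widetilde{M}_f,\widetilde{K})$ is then a finite, based, free $\mathbb{Z}[G]$-complex; it is acyclic since $f$ is a homotopy equivalence, and its basis is determined up to the signs $\pm g$ of lifts. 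I would define $\tau(f)\in \mathrm{Wh}(G)=K_1(\mathbb{Z}[G])/\langle \pm g\rangle$ as the torsion of this complex. The first step is to check that the definition is independent of choices: reordering cells, changing orientations and choosing other lifts change the torsion only by elements $\pm g$.

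\emph{Direction ($\Rightarrow$).} I would first prove the composition formula $\tau(g\circ f)=\tau(g)+g_\ast\tau(f)$, using the short exact sequence of based chain complexes attached to a triple $K\subset M\subset N$ and the multiplicativity of torsion in short exact sequences. I would also show that $\tau$ depends only on the homotopy class of $f$. Next, for an elementary expansion $K\nearrow K\cup\sigma\cup\tau$, the relative complex is $\mathbb{Z}[G]\xrightarrow{\pm g}\mathbb{Z}[G]$ concentrated in two adjacent degrees, so its torsion vanishes in $\mathrm{Wh}(G)$. The same holds for a collapse, being the homotopy inverse of an expansion. By the definition of simple homotopy equivalence, a simple-homotopy equivalence is homotopic to a composite of such moves, so the composition formula gives $\tau(f)=0$.

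\emph{Direction ($\Leftarrow$).} Assume $\tau(f)=0$ and work with the pair $(M_f,K)$, which is a deformation retract pair. The first sub-step is to normalize the relative cells. Using expansions followed by collapses (Whitehead's cell trading), I would replace $(M_f,K)$ by a simple-homotopy equivalent pair whose relative cells lie in only two adjacent dimensions $n$ and $n+1$, with $n\ge 2$. Then $C_\ast(\widetilde{M}_f,\widetilde{K})$ is a single invertible matrix $A\in GL_r(\mathbb{Z}[G])$, and its class in $\mathrm{Wh}(G)$ is $\tau(f)=0$. The second sub-step is to stabilize: adding trivial expansion pairs of $n$- and $(n+1)$-cells replaces $A$ by $A\oplus I$. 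After enough stabilization, $A$ becomes a product of elementary matrices and diagonal matrices with entries $\pm g$.

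The last sub-step is geometric. Each elementary row operation is realized by sliding the attaching map of one $(n+1)$-cell over another, which is a homotopy of attaching maps and hence preserves simple-homotopy type. Multiplication by a diagonal entry $\pm g$ is realized by reorienting a cell or choosing a different lift. Once $A$ has been reduced to the identity, each $(n+1)$-cell meets its paired $n$-cell with degree one. For $n\ge 2$, via a Whitney-type/cancellation argument, the pairs can then be cancelled by collapses, giving $M_f\searrow K$ up to expansions. Hence $K\hookrightarrow M_f$ is simple, and so is $f$.

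I expect the main obstacle to be this geometric realization step. Making cell trading and handle slides precise in the simplicial category requires subdivisions and care with low dimensions, which is why one first pushes the cells up to dimensions $n\ge 2$.
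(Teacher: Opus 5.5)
The paper gives no proof of this statement; it is quoted from Whitehead and Milnor. Your plan is the classical argument behind that citation, essentially as in \cite{Coh}: mapping cylinder, torsion of the based acyclic $\mathbb{Z}[G]$-complex $C_\ast(\widetilde{M}_f,\widetilde{K})$, composition formula and homotopy invariance for ($\Rightarrow$), and normal form plus matrix reduction for ($\Leftarrow$). The ($\Rightarrow$) direction is fine as sketched.

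\textbf{The final cancellation step in ($\Leftarrow$).} This is the weak spot. The Whitney trick is a manifold (h-cobordism) tool and plays no role for CW complexes; the step is homotopy-theoretic. Write $L'\supset K$ for the normalized complex, with relative cells $e^n_j$ and $e^{n+1}_i$, $n\ge 2$.

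First, each $e^n_j$ has attaching map null-homotopic in $K$. It bounds in $L'$, and $K\hookrightarrow L'$ is a homotopy equivalence. For $n=2$ this is also what keeps $\pi_1$ unchanged.

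Next, put $X=K\cup\bigcup_j e^n_j$. Relative Hurewicz on universal covers gives $\pi_n(X,K)\cong H_n(\widetilde X,\widetilde K)\cong\bigoplus_j\mathbb{Z}[G]$. Under this identification, the attaching map $\varphi_i$ of $e^{n+1}_i$ goes to its row of $A$.

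Now suppose that row is the $i$-th unit vector. Let $\chi_i$ be a characteristic map of $e^n_i$ on the upper hemisphere, and $h_i$ a null-homotopy in $K$ of its attaching map on the lower hemisphere. Exactness of $\pi_n(K)\to\pi_n(X)\to\pi_n(X,K)$ shows that $\varphi_i$ differs from $\chi_i\cup h_i$ by an element of $\pi_n(K)$. Absorb that element into $h_i$, and make $h_i$ cellular.

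After this homotopy of attaching maps, $(e^n_i,e^{n+1}_i)$ is by definition an elementary CW expansion, since the back face may map anywhere in $K^{(n)}$. The image of $\varphi_i$ then meets no other $e^n_j$, so the pairs collapse one at a time. An integer ``degree one'' condition would not be enough here; what makes it work is the full $\mathbb{Z}[G]$-row together with $n\ge 2$.

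\textbf{Passing from CW to simplicial.} You flag this but do not supply it, and with the paper's definitions it is genuinely needed. Cell trading, slides and the cancellation above produce CW complexes and CW collapses. The paper, however, defines simple-homotopy equivalence through simplicial elementary expansions and collapses.

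To conclude ($\Leftarrow$) in that sense, you need the classical comparison theorem: a CW formal deformation between finite simplicial complexes can be replaced, up to homotopy, by simplicial expansions and collapses. Its proof involves showing that subdivisions are simple.

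Similarly, a simplicial mapping cylinder exists only for simplicial maps. For a general continuous $f$ you must either pass to a simplicial approximation on a subdivision of $K$ and use subdivision invariance of $\tau$, or work from the start with the CW mapping cylinder of a cellular approximation.

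With these two points filled in, your outline becomes a complete proof of the cited theorem.
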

	\noindent Thus, vanishing of Whitehead torsion characterizes when a homotopy
	equivalence can be realized by a finite sequence of elementary
	collapses and expansions. Whitehead torsion is invariant under homotopy equivalence \cite[Statement 22.1]{Coh}.
	In particular, homotopy equivalent finite simplicial complexes have identical Whitehead torsion.
	\section{ Morse Complexity Profile}\label{S3}
	\noindent In this section, we define the minimal Morse number and the Morse complexity profile, and prove the main invariance results under simple homotopy equivalence.
	\begin{define} [Minimal Morse Number]\label{D3.1}
		Let $K$ be a finite simplicial complex. We define the minimal Morse number with respect to $K$ by
		\begin{equation}M(K)=\min_{f}\left\{\sum_{i\ge 0} c_i(f)\;\middle|\; f \text{ is a discrete Morse function on } K \right\},
		\end{equation}
		where $c_i(f)$ denotes the number of critical $i$-simplices of the discrete Morse function $f$.
	\end{define}
	\begin{define}[Morse Complexity profile]\label{D3.2}
		Given a filtration $K_\bullet=\{K_t\}_{t\geq 0}$ of a finite simplicial complex $K$, we define the Morse complexity profile by
		\begin{equation}
			\mathcal{M}(K_\bullet)=\{M(K_t)\}_{t\ge 0}.
		\end{equation}
	\end{define}
	\begin{define}[Morse spike]\label{D3.3}
		Let $K_\bullet=\{K_t\}_{t\geq 0}$ be a filtration of a finite simplicial complex $K$. We say that a Morse spike occurs at $t$ if:
		
		\smallskip
		i) $\,H_{\ast}(K_{t-1}) \cong H_{\ast}(K_t) \cong H_{\ast}(K_{t+1}),$
		
		\smallskip
		ii) $ M(K_t)>M(K_{t-1}),$ and
		
		\smallskip
		iii) $M(K_t) > M(K_{t+1})$.
	\end{define}
	
	\begin{theorem}\label{T3.4}
		Let $K$ and $L$ be finite simplicial complexes that are simple-homotopy equivalent. Then, $$
		M(K)=M(L).
		$$
		Moreover, for each dimension $k$,
		$$
		m_k(K)=m_k(L),
		$$
		where $m_k(K)$ is the minimal number of critical $k$-simplices achievable.
		
	\end{theorem}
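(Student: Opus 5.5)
The plan is to reduce the statement to a single elementary collapse. By the definition of simple homotopy equivalence, $K$ and $L$ are joined by a finite zig-zag of elementary collapses and expansions. Equality of integers is transitive, so it suffices to prove $m_k(K)=m_k(L)$ for every $k$ whenever $K=L\cup\{\sigma,\tau\}$ with $\sigma$ a free face of $\tau$. The identity $M(K)=M(L)$ should then follow from the per-dimension identities, once I check that a single discrete Morse function realises all the minima $m_k$ at once. For this I would use the weak Morse inequalities (Theorem \ref{T2.12}) together with the Euler characteristic identity. Alternatively, I would prove the two inequalities directly for the total count $M$ by the same argument as below.

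\emph{Step 1 (the easy inequality).} Let $V$ be an acyclic matching on $L$ realising $m_k(L)$. Put $V'=V\cup\{(\sigma,\tau)\}$ on $K$. Any $V'$-path that enters $\sigma$ must continue to $\tau$. It can then only leave $\tau$ through a face of $\tau$ other than $\sigma$, and no path ever returns to $\sigma$, because $\tau$ is the unique coface of $\sigma$. Hence $V'$ has no closed paths, and by Theorem \ref{T2.10} it is the gradient of a discrete Morse function. Its critical cells are exactly those of $V$. This gives $m_k(K)\le m_k(L)$ for all $k$.

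\emph{Step 2 (the reverse inequality).} Now let $W$ be an optimal acyclic matching on $K$, and suppose $(\sigma,\tau)\notin W$. I would repair $W$ along gradient paths. If $\sigma$ is matched downward to some $\rho$, and $\tau$ is matched to some face $\sigma'$ or is critical, I would follow the $W$-path starting at $\tau$. I would then reverse the matching along that path (the standard Forman path-reversal / critical-cell cancellation). The aim is to produce a new acyclic matching that contains $(\sigma,\tau)$ and has no more critical cells in each dimension. Acyclicity of the reversed matching would be checked again via Theorem \ref{T2.10}. Once $(\sigma,\tau)\in W$, deleting the pair gives an acyclic matching on $L$ with the same critical cells, so $m_k(L)\le m_k(K)$.

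The main obstacle is Step 2. Path reversal requires a \emph{unique} gradient path between the relevant cells, and an optimal $W$ on $K$ need not have this property. More seriously, the reverse inequality is exactly where a global obstruction would show up. For instance, if $K$ is simple-homotopy equivalent to a point but not collapsible, the bound $M(K)=1$ can fail. So I would first try to prove Step 2 under the extra hypothesis that $\sigma$ is free in every intermediate complex. Before asserting the equality in full generality, I would then check carefully whether the zig-zag can always be rearranged (expansions first, then collapses) so that only Step 1 is ever needed in the direction required.
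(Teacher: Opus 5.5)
Your Step 1 is correct, and it is also the paper's first inequality. Step 2, however, cannot be carried out by any repair, because the statement is false. Your own remark about contractible non-collapsible complexes already points to a counterexample.

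Let $D$ be the Dunce hat. It is contractible and $\mathrm{Wh}(1)=0$, so by Theorem~\ref{T2.18} the map $D\to\mathrm{pt}$ is a simple-homotopy equivalence. Geometrically, $D\times I$ collapses onto $D\times\{0\}$ and, by a classical result of Zeeman, is itself collapsible.

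Now take any discrete Morse function on $D$. Lemma~\ref{L3.6} together with $\chi(D)=c_0-c_1+c_2=1$ forces $c_1\ge 1$: if $c_1=0$, then $c_0=1$ and $c_2=0$, so no positive-dimensional cell is critical. Hence $c_0+c_1+c_2=1+2c_1\ge 3$. Thus $M(D)\ge 3>1=M(\mathrm{pt})$ and $m_1(D)\ge 1>0=m_1(\mathrm{pt})$, so both assertions fail. The paper's own Theorem~\ref{T3.8} contains the same inconsistency: its $K_0,K_1,K_2$ are all contractible, hence pairwise simple-homotopy equivalent, yet the profile is non-constant.

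Reordering the zig-zag as $K\nearrow X\searrow L$ cannot help either. Step 1 then gives only $M(X)\le M(K)$ and $M(X)\le M(L)$, two inequalities pointing the same way. What your argument really proves is monotonicity under collapse: $K\searrow L$ implies $m_k(K)\le m_k(L)$ for every $k$ and $M(K)\le M(L)$.

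It is worth recording exactly where the reverse inequality dies, since this is also where the paper's proof breaks. Consider an elementary collapse $K\searrow K'=K\setminus\{\sigma,\tau\}$; here $\tau$ is maximal and is the only coface of $\sigma$. Let $W$ be an optimal acyclic matching on $K$.
\begin{itemize}
\item If $\sigma$ or $\tau$ is critical in $W$, remove whatever pair contains the other cell (if any) and add $(\sigma,\tau)$. This creates no closed path, since nothing but $\tau$ lies above $\sigma$, and it does not increase the total number of critical cells.
\item The bad configuration is exactly the one you isolated: $\sigma$ paired downward with a $(k-2)$-face $\rho$, and $\tau$ paired with a facet $\alpha\ne\sigma$. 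Forcing $(\sigma,\tau)$ into the matching then frees both $\rho$ and $\alpha$, and one only gets $M(K')\le M(K)+2$.
\end{itemize}
This configuration genuinely occurs. In any zig-zag from a point to $D$, expansions cannot raise $M$, by Step 1. So some elementary collapse $X\searrow X'$ has $M(X')>M(X)$, and at that step every optimal matching on $X$ must be of the bad type.

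The paper uses the same two-inequality scheme but splits only according to whether $\sigma$ and $\tau$ are critical. It never treats the case where both are non-critical but paired elsewhere. Its Case~3 also wrongly claims that a non-critical $\sigma$ must be paired with $\tau$, overlooking downward pairings. So the gap you flagged is real and cannot be closed; the theorem has to be weakened to the one-sided inequality above.
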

	\begin{proof}
		Let $K'$ be obtained from $K$ by collapsing a free pair
		$(\sigma^{k-1},\tau^{k})$. Let $f_{K'}$ be a discrete Morse function on $K'$ that achieves the minimal number 
		of critical simplices. We extend $f_{K'}$ to a function $f_K$ on $K$ by setting $f_K|_{K'}=f_{K'}$ and matching the pair $(\sigma,\tau)$. Hence, neither $\sigma$ nor $\tau$ is 
		critical. Therefore, $f_K$ has the same number of critical simplices as $f_{K'}$, 
		and consequently \begin{equation}\label{eq3.3}
			M(K)\leq M(K').
		\end{equation}

		\smallskip
		\noindent Now, let $f_K:K\longrightarrow\mathbb{R}$ be a discrete Morse
		function on $K$ achieving the minimal number of critical simplices. We now consider the following four cases.
		
		\noindent \textit{Case 1.} If the pair $(\sigma,\tau)$ is already matched in $f_K$, i.e., $\widetilde{M}_{f_{K}}(\sigma)=\tau$, then collapsing
		this pair and restricting the function to $K'$,
		$f_{K'} = f_K|_{K'},$ gives a discrete Morse function on $K'$ with the same number of
		critical simplices. Hence
		\begin{equation}\label{eq3.4}
			M(K')\leq M(K).
		\end{equation}
		
		\smallskip
		\noindent
		\textit{Case 2.} If $\sigma$ is critical and $\tau$ is non-critical, then there exists a $(k-1)$-simplex $\alpha$ 
		such that $(\alpha,\tau)$ is a gradient pair in the discrete gradient 
		vector field $V$ associated to the Morse function $f_K$. 
		Define a new discrete vector field
		$$
		W=(V\setminus\{(\alpha,\tau)\})\cup\{(\sigma,\tau)\}.
		$$
		Since $\sigma$ is a free face of $\tau$, no other $k$-simplex contains $\sigma$. 
		Hence, no gradient path can enter $\sigma$ from another $k$-cell. 
		Therefore, the modified vector field $W$ has no non-trivial closed paths. 
		By Theorem \ref{T2.10}, there exists a discrete Morse function $g_K$ 
		whose gradient vector field is $W$. Moreover, the number of $k$-critical simplices does 
		not increase (see \cite[Corollary 9.4]{RB98}). This now reduces to case 1. 
		
		\smallskip
		\noindent \textit{Case 3.} If $\tau$ is critical and $\sigma$ is non-critical, then there exists a $k$-simplex $\beta$ such that $(\sigma,\beta)$ 
		is a gradient pair. Since $\sigma$ is a free face, it is contained 
		in exactly one $k$-simplex, namely $\tau$. Hence $\beta=\tau$, 
		which implies that $(\sigma,\tau)$ is already a gradient pair. 
		This contradicts the assumption that $\tau$ is critical. 
		Therefore this situation cannot occur.

		\smallskip
		\noindent \textit{Case 4.} If both $\sigma$ and $\tau$ are critical, then by the critical pair cancellation theorem \cite[Theorem 9.1]{RB98}, there is another Morse function $g_K$ so that $(\sigma,\tau)$ becomes a matched pair,
		without increasing the number of critical simplices. Collapsing this pair and restricting the function to $K'$
		produces a discrete Morse function on $K'$ with the same number of critical simplices. Therefore
		\begin{equation}\label{eq3.5}
			M(K')\leq M(K).
		\end{equation} Therefore, using equations \eqref{eq3.3} and \eqref{eq3.4} or \eqref{eq3.5}, we obtain $$M(K)= M(K').$$
		
		\smallskip
		\noindent
		On the other hand, if $K'$ is obtained from $K$ by the inverse operation expansion, then by applying a similar argument, we obtain
		$$M(K)= M(K').$$
		
		\smallskip
		\noindent
		Now since $K$ and $L$ are simple-homotopy equivalent,  there
		exists a sequence of complexes
		\[
		K = K_n, K_{n-1}, \ldots, K_0 = L
		\]
		such that each step is an elementary collapse or expansion. Further, as the
		minimal Morse number is preserved at each step, we have
		$M(K_i)=M(K_{i+1})$ for all $i$. Consequently,
		$M(K)=M(L).$ 
		
		\smallskip
		\noindent Finally, in a similar way, we prove the statement for $m_k$. Let $K'$ be obtained from $K$ by collapsing a free pair $(\sigma^{k-1},\tau^{k})$. 
		Consider a discrete Morse function on $K'$ achieving the minimal number of $k$-critical simplices and extend it to $K$ by matching $(\sigma,\tau)$. This gives $m_k(K)\le m_k(K')$. Conversely, starting with a discrete Morse function on $K$ achieving $m_k(K)$ and modifying it so that $(\sigma,\tau)$ becomes a gradient pair, we obtain a Morse function on $K'$ with at most the same number of $k$-critical simplices. Hence $m_k(K')\le m_k(K)$, and therefore $m_k(K)=m_k(K')$. Since simple-homotopy equivalence is obtained by a sequence of collapses and expansions, it follows that
		$$
		m_k(K)=m_k(L).
		$$
	\end{proof}
	\noindent
	It follows from Theorem \ref{T3.4} that the Morse complexity profile is invariant under
	levelwise simple-homotopy equivalence.
	
	\smallskip
	\noindent 
	We now recall that a topological space $X$ is contractible if it is homotopy equivalent to the one-point space. A simplicial complex $K$ is said to be collapsible if it admits a sequence of elementary collapses that reduces it to a single vertex. Furthermore, every collapsible complex is contractible, but the converse is not always true.
	
	\begin{lemma}\label{L3.5}
		Let $K$ be a finite collapsible simplicial complex. Then, $m_0(K)=1$ and $m_k(K)=0$ for $k>0$. Moreover, the minimal Morse number satisfies $M(K)=1$.
	\end{lemma}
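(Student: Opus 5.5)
The plan is to prove the lower bounds and the upper bounds separately, and then combine them. For the lower bounds I will use the weak Morse inequalities (Theorem \ref{T2.12}). A collapsible complex $K$ is nonempty and connected: elementary collapses preserve the number of components, and the endpoint is a single vertex. Working over a field $\mathbb{F}$, this gives $b_0(K)=1$. So every discrete Morse function $f$ on $K$ satisfies $c_0(f)\ge 1$, hence $m_0(K)\ge 1$. Trivially $m_k(K)\ge 0$ for every $k$. Since every discrete Morse function has at least one critical vertex, we also get $M(K)\ge 1$.

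For the upper bounds I will build one explicit discrete Morse function from a collapse sequence. Fix a collapse $K=K_n\searrow_e K_{n-1}\searrow_e\cdots\searrow_e K_0=\{v\}$, where the $j$-th elementary collapse removes the free pair $(\sigma_j,\tau_j)$. Let $V=\{(\sigma_j,\tau_j)\}_{j}$. Every simplex other than $v$ is removed exactly once, so $V$ is a matching covering all simplices except $v$. It remains to check that $V$ is a gradient vector field.

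I will do this by induction along the collapse, mirroring the extension step used in the proof of Theorem \ref{T3.4}. Suppose $K_{j-1}$ carries a discrete Morse function whose only critical simplex is $v$. Extend it to $K_j=K_{j-1}\cup\{\sigma_j,\tau_j\}$ by giving $\sigma_j$ and $\tau_j$ a common value larger than every value already used. One can also take $f(\tau_j)=f(\sigma_j)$ slightly above $\max f|_{K_{j-1}}$, or perturb so that $f(\sigma_j)>f(\tau_j)$ only on this pair.

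The Morse condition holds at the new simplices for two reasons. First, $\sigma_j$ is a free face, so its only coface in $K_j$ is $\tau_j$. Second, every other face of $\tau_j$ lies in $K_{j-1}$ and therefore has a strictly smaller value. The values on $K_{j-1}$ are unchanged, so no old simplex gains a violation: old simplices have smaller values than the new cofaces $\tau_j$ and $\sigma_j$, which is the correct direction. So the extended function is a discrete Morse function with critical set $\{v\}$. Alternatively, I could check acyclicity of $V$ directly and invoke Theorem \ref{T2.10}. That route uses the fact that a $V$-path can only move from a pair to pairs removed earlier in the collapse order.

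This yields $c_0=1$ and $c_k=0$ for $k>0$, hence $m_0(K)\le 1$, $m_k(K)=0$ for $k>0$, and $M(K)\le 1$. Together with the lower bounds, $m_0(K)=1$, $m_k(K)=0$ for $k>0$, and $M(K)=1$.

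The main obstacle is the verification that the extension remains a discrete Morse function. Specifically, the value assigned to $\sigma_j$ must be comparable correctly with the faces of $\sigma_j$, with $\tau_j$, and with the other faces of $\tau_j$. I expect to handle this with an explicit value choice: $f(\sigma_j)=f(\tau_j)=c_j$, with $c_j$ increasing in $j$ and above all earlier values. Each new simplex then has exactly one exceptional neighbor, namely its partner.
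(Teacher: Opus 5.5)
Your argument is correct and is essentially the paper's proof: the matching $V$ read off from a collapse sequence yields a discrete Morse function whose only critical simplex is the terminal vertex $v$, and the weak Morse inequalities with $b_0(K)=1$ give the matching lower bounds. The one difference is that you check the Morse condition directly, by assigning increasing values $c_j$ to the pairs as you rebuild $K$ from $\{v\}$, whereas the paper invokes Theorem~\ref{T2.10} after asserting, without real justification, that $V$ has no closed $V$-paths; your inductive construction supplies the step the paper leaves implicit.
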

	\begin{proof}
		Since $K$ is collapsible, there exists a sequence of elementary collapses
		$$K = K_n \searrow K_{n-1} \searrow \cdots \searrow K_0 = \{v\},$$
		where each step removes a free pair $(\sigma_\iota^r,\tau_\iota^{r+1})$, with
		$\sigma_i$ a free face of $\tau_i$. Let $V=\{(\sigma_\iota^r,\tau_\iota^{r+1})\}_{i=1}^{n}$. Since a free face belongs to
		exactly one coface, every simplex of $K$ appears in at most one pair in $V$,
		so $V$ is a well-defined vector field with no closed paths. Then, by Theorem \ref{T2.10}, there exists a discrete Morse
		function $f$ on $K$ whose gradient vector field is $V$. Since every simplex except the final vertex $v$
		belongs to some pair $(\sigma_i,\tau_i)$ in $V$, $v$ is critical. Therefore,
		$m_0(K)\le 1$ and $m_k(K)=0$ for $k>0$.
		
		\smallskip
		\noindent
		Now, as a collapsible complex is connected, $\beta_0(K)=\dim H_0(K;\mathbb F)=1$. Let $c_k(f)$ denote the number of critical $k$-simplices of $f$. Then, by the Morse inequalities, Theorem \ref{T2.12}, $c_0(f)\ge 1$ for every discrete Morse function $f$ on $K$. Taking the minimum over all such functions, we obtain $m_0(K) \ge 1$. Thus $m_0(K)=1$ and $m_k(K)=0$ for $k>0$. Hence
		$$
		M(K) = 1.
		$$
	\end{proof}
	\begin{lemma}\label{L3.6}
		Let $K$ be a contractible but non-collapsible simplicial complex. 
		Then any discrete Morse function on $K$ has at least one critical simplex 
		of positive dimension. In particular, $M(K)\ge 2$.
	\end{lemma}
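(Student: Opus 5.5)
The plan is to argue by contradiction, combining the Weak Morse Inequalities (Theorem \ref{T2.12}) with the Discrete Morse Lemma. Let $f$ be any discrete Morse function on $K$. Since $K$ is contractible, it is connected, so $b_0=\dim H_0(K;\mathbb F)=1$. Theorem \ref{T2.12}(i) then gives $c_0(f)\ge 1$. It therefore suffices to show that $f$ has a critical simplex of positive dimension, since this immediately gives $\sum_i c_i(f)\ge 2$. Taking the minimum over all $f$ then yields $M(K)\ge 2$.

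Suppose, for contradiction, that every critical simplex of $f$ is a vertex. First I would pin down the number of critical vertices. Contractibility gives $\chi(K)=1$, so Theorem \ref{T2.12}(ii) gives $c_0(f)=\chi(K)=1$. Hence $f$ has exactly one critical simplex, a vertex $v$. Next I want to show that $K$ collapses onto $\{v\}$. The cleanest route is through the sublevel filtration $K_\alpha=\{\sigma: f(\sigma)\le \alpha\}$. First I check that $v$ attains the global minimum of $f$ and is the only simplex at that level. For this I would first replace $f$ by an injective discrete Morse function with the same gradient vector field. This is possible because, by Theorem \ref{T2.10}, only the acyclic gradient field $V$ matters, and the ordering given by a linear extension of the acyclic matching realises $V$ with distinct values.

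With $f$ injective, let $\sigma_0$ be the simplex of minimal value. Then $\sigma_0$ must be a vertex, because any proper face of a simplex of dimension $\ge 1$ lies in $K$ and has value $\ge f(\sigma_0)$. A simplex of dimension $\ge 2$ has at least two faces of codimension one, which would violate the Morse condition. If $\sigma_0$ is an edge, both of its vertices have values $>f(\sigma_0)$, which again violates the Morse condition. If the vertex $\sigma_0$ were non-critical, it would be paired with an edge $e\supset\sigma_0$ with $f(e)\le f(\sigma_0)$, and injectivity would force $f(e)<f(\sigma_0)$, contradicting minimality. Hence $\sigma_0=v$ and $K_{f(v)}=\{v\}$.

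Now take $\alpha=f(v)$ and $\beta=\max f$. The interval $(\alpha,\beta]$ contains no critical simplex, so the Discrete Morse Lemma (i) gives $K=K_\beta\searrow K_\alpha=\{v\}$. Thus $K$ is collapsible, contradicting the hypothesis, so $f$ has a critical simplex of positive dimension. As a remark, the Euler characteristic relation then forces at least two extra critical simplices, but only $M(K)\ge2$ is needed.

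The main obstacle is the step identifying the bottom sublevel set with the single critical vertex. Discrete Morse functions in Forman's sense may take equal values on a gradient pair, so the perturbation to an injective function with the same gradient field must be justified carefully. If that becomes delicate, the fallback is to bypass $f$ and work directly with the acyclic matching $V$. I would list the matched pairs in reverse order of a linear extension of the acyclic relation, and check that each pair is a free pair at its stage of removal. This exhibits an explicit collapse sequence to $v$, exactly as in the converse direction of the proof of Lemma \ref{L3.5}.
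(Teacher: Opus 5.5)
Your proposal is correct and follows the same route as the paper: argue by contradiction, use the Morse inequalities with $\chi(K)=1$ to force exactly one critical simplex (a vertex), and conclude that the remaining gradient pairs collapse $K$ onto that vertex, contradicting non-collapsibility. Your extra care on the last step fills in what the paper only asserts (``each such pair corresponds to an elementary collapse''). You either pass to an injective Morse function with the same gradient field, so that $K_{f(v)}=\{v\}$ and the Discrete Morse Lemma applies, or you collapse in the reverse of a linear-extension order.
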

	\begin{proof}
		Suppose that there exists a discrete Morse function $f$ on $K$
		with no critical simplices of positive dimension. Then, all critical simplices of $f$
		are vertices. Since $K$ is contractible,  $\beta_0(K)=\dim H_0(K;\mathbb F)=1$ and $\beta_k(K)=\dim H_k(K;\mathbb F)=0$ for all $k>0$.  Then, by the weak Morse inequalities, Theorem \ref{T2.12}, we obtain $c_0(f)=1.$
		This shows that $f$ must have only one
		critical simplex, a vertex. Therefore, every other simplex of $K$ belongs to a gradient pair in the
		discrete gradient vector field of $f$. Each such pair corresponds to an
		elementary collapse, and reduces $K$ to the critical vertex. Hence $K$ is collapsible, which gives a contradiction. Thus, any discrete Morse function on $K$ must have at least one critical
		simplex of positive dimension, and therefore $M(K)\ge 2$.
	\end{proof}
	\begin{define}\cite[Definition 1.19]{VN}
		Let $K$ be a simplicial complex with vertex set $K_0$. 
		The \emph{cone} over $K$, denoted $\mathrm{Cone}(K)$, is the simplicial complex 
		defined on the vertex set $K_0 \cup \{v\}$, where $v$ is a new vertex not 
		contained in $K_0$. For any $d>0$, 
		a $d$-simplex of $\mathrm{Cone}(K)$ is either a $d$-simplex of $K$ or a simplex $v\ast \sigma$
		obtained by adjoining $v$ to a $(d-1)$-simplex $\sigma$ of $K$.
	\end{define}
	\noindent
	We know from \cite[\S 2, Example 2.4]{VN} that the cone over any simplicial complex is contractible. Moreover, the cone over any simplicial complex is also collapsible (see \cite[Lemma 2.3.4 \& Lemma 2.3.6]{AB}).
	
	\smallskip
	\noindent We now show that the persistent homology does not determine the Morse complexity profile. 
	\begin{theorem}\label{T3.8}
		There exists a finite filtration 
		$$
		K_\bullet=\{K_0\subset K_1\subset K_2\}
		$$
		of connected finite simplicial complexes such that:
		
		\smallskip
		i) for all $i,j$ the induced maps
		$$
		H_{\ast}(K_i) \rightarrow H_{\ast}(K_j)
		$$
		are isomorphisms,
		
		\smallskip
		ii) the persistence module $H_{\ast}(K_\bullet)$ is isomorphic to that of a point, and
		
		\smallskip
		iii) the Morse complexity profile $\mathcal{M}(K_\bullet)$ is non-constant.
	\end{theorem}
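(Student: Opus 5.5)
The natural approach is to build a three-stage filtration that passes through a contractible, non-collapsible complex and then cones it off. Take $K_0$ to be a single vertex $v_0$ (or any collapsible complex), $K_1$ a triangulation of the dunce hat (or Bing's house) containing $v_0$, and $K_2=\mathrm{Cone}(K_1)$. Then $K_0\subset K_1\subset K_2$ is a filtration of connected finite simplicial complexes. If the first stage should already be non-trivial, any collapsible subcomplex of $K_1$ containing $v_0$, such as a spanning tree of its $1$-skeleton, works equally well; I will use the vertex, since that makes the extremes cleanest.

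For (i) and (ii), all three complexes are contractible. $K_0$ is a point, the dunce hat is contractible, and a cone is contractible by \cite[\S 2, Example 2.4]{VN}. Hence $H_*(K_i)\cong H_*(\mathrm{pt})$ for each $i$, so each is $\mathbb Z$ in degree $0$ and zero otherwise. Every inclusion $K_i\hookrightarrow K_j$ is a map between connected complexes, so it induces an isomorphism on $H_0$. In higher degrees it is the zero map between zero groups, hence also an isomorphism. This gives (i). Consequently the persistence module is the constant module $\mathbb Z$ in degree $0$ with identity maps, which is isomorphic to the persistence module of the constant point filtration; this gives (ii).

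For (iii), $K_0$ is collapsible, and $K_2$ is collapsible since a cone is collapsible \cite[Lemma 2.3.4 \& Lemma 2.3.6]{AB}. By Lemma \ref{L3.5}, $M(K_0)=M(K_2)=1$. The dunce hat is contractible but not collapsible, since it has no free faces: every edge lies in at least two triangles, by the identification of its triangulated model. Lemma \ref{L3.6} therefore gives $M(K_1)\ge 2$. The profile $(1,\ge 2,1)$ is non-constant, and as a bonus it exhibits a Morse spike at $t=1$.

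The main obstacle is a rigorous certification that the chosen triangulation of the dunce hat is a genuine simplicial complex with no free edge, and hence is non-collapsible. I would fix an explicit standard $8$-vertex (or similar) triangulation, cite it, and observe directly that each edge has degree $\ge 2$. This rules out any first elementary collapse from a $2$-dimensional complex with no free faces. (Free vertices cannot occur either, because every vertex lies in some edge of a triangle.) Contractibility would be cited as the standard fact that the dunce hat is homotopy equivalent to a point. A minor point to address is that the filtration need not start with $\emptyset$, since the statement only concerns the three listed stages.
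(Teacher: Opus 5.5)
Your proposal is correct and follows essentially the same route as the paper. Both use a vertex, the dunce hat, and its cone; contractibility gives (i)--(ii), and Lemmas \ref{L3.5} and \ref{L3.6} give the profile $(1,\ge 2,1)$. Your extra justification of non-collapsibility, via the absence of free faces because every edge lies in at least two triangles, fills in a fact the paper only asserts.
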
 
	\begin{proof}
		We construct three simplicial complexes $K_0\subset K_1 \subset K_2$. First, let $K_0$ be a single vertex.
		Next, let $K_1=D$, where $D$ is the Dunce hat, which is a finite $2$-dimensional simplicial complex that is contractible but not collapsible.
		Identifying the unique vertex of $K_0$ with a vertex of $D$ gives an inclusion $K_0 \subset K_1$. Finally, let $K_2$ be the cone over $D$, i.e., $K_2:=\mathrm{Cone}(D)=v\ast D$, where $v$ is a new vertex not contained in the vertex set of $D$. In this complex every simplex
		of $D$ is joined with $v$ to form a higher-dimensional simplex. Since the cone over any simplicial complex is both contractible and collapsible,
		the complex $K_2$ is both contractible and collapsible. As $K_0$, $K_1$, and $K_2$ are all contractible, for each $i$, the homology groups satisfy $H_0(K_i)\cong \mathbb{Z}$ and $H_k(K_i)=0$ for all $k>0$. Hence the inclusion maps $K_0\hookrightarrow K_1$ and $K_1\hookrightarrow K_2$ induce isomorphisms on homology and the persistent
		homology of the filtration is trivial.
		
		\noindent We now compare their minimal Morse numbers. From Lemma \ref{L3.5}, we get $M(K_0)=1$ and $M(K_2)=1$. Moreover, by Lemma \ref{L3.6}, we obtain $M(K_1)\ge 2$.  Therefore, the Morse complexity profile $(1,\ge 2,1)$ is not constant. This completes the proof.
	\end{proof}
	\begin{theorem} \label{T3.9}
		Let $K_\bullet=\{K_i\}_{i\geq 0}$ be a filtration of finite simplicial complexes. Suppose that there exists an index $t$ such that $K_t$ is contractible but not collapsible. Then,
		$$
		M(K_t)\ge2.
		$$
		Moreover, if $K_{t-1}$ and $K_{t+1}$ are collapsible, then
		$$
		M(K_{t-1}) = M(K_{t+1}) =1
		$$
		Consequently, a Morse spike occurs at the index $t$.
	\end{theorem}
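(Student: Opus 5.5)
The plan is to assemble the three conclusions of Theorem~\ref{T3.9} directly from Lemma~\ref{L3.5} and Lemma~\ref{L3.6}, and then to check the three conditions of Definition~\ref{D3.3} one at a time.

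\emph{The bound at $K_t$.} Since $K_t$ is contractible but not collapsible, Lemma~\ref{L3.6} applies verbatim. Every discrete Morse function on $K_t$ has a critical simplex of positive dimension. The weak Morse inequalities (Theorem~\ref{T2.12}) with $b_0=1$ also force a critical vertex. Hence $M(K_t)\ge 2$.

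\emph{The neighbouring levels.} Under the additional hypothesis that $K_{t-1}$ and $K_{t+1}$ are collapsible, Lemma~\ref{L3.5} gives $M(K_{t-1})=M(K_{t+1})=1$ immediately. Combined with $M(K_t)\ge 2$, this yields $M(K_t)>M(K_{t-1})$ and $M(K_t)>M(K_{t+1})$, which are conditions ii) and iii) of Definition~\ref{D3.3}.

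\emph{The homology condition.} Condition i) needs $H_\ast(K_{t-1})\cong H_\ast(K_t)\cong H_\ast(K_{t+1})$. Every collapsible complex is contractible, as recalled before Lemma~\ref{L3.5}, so all three complexes are contractible. Each therefore has $H_0\cong\mathbb Z$ and $H_k=0$ for $k>0$, and the isomorphisms hold abstractly. Moreover, the inclusions $K_{t-1}\hookrightarrow K_t\hookrightarrow K_{t+1}$ are maps between contractible spaces, hence homotopy equivalences. So the isomorphisms are even induced by the filtration maps, exactly as in the proof of Theorem~\ref{T3.8}.

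The argument is essentially bookkeeping. The only point needing care, and the main obstacle, is that the word ``Consequently'' must be read under the ``Moreover'' hypothesis that $K_{t\pm1}$ are collapsible. Without that hypothesis, a neighbouring level might be non-contractible, so condition i) could fail, or it might also be non-collapsible with $M\ge 2$, so the strict inequalities could fail. I would state this dependence explicitly. I would also note that $t\ge 1$ is implicitly required for $K_{t-1}$ to exist.
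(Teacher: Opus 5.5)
Your proposal is correct and follows the paper's proof: Lemma~\ref{L3.6} gives $M(K_t)\ge 2$, Lemma~\ref{L3.5} gives $M(K_{t-1})=M(K_{t+1})=1$, and contractibility of all three levels gives the homology condition of Definition~\ref{D3.3}. Your extra remarks are accurate refinements that the paper leaves implicit: that the spike conclusion depends on the collapsibility hypothesis on $K_{t\pm1}$, that $t\ge 1$ is needed, and that the homology isomorphisms are induced by the inclusions.
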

	\begin{proof}
		Since $K_t$ is contractible but not collapsible, by Lemma \ref{L3.6}, $M(K_t)\ge 2.$ Furthermore, as $K_{t-1}$ and $K_{t+1}$ are collapsible, we have $M(K_{t-1})= M(K_{t+1})=1$ by Lemma \ref{L3.5}. Moreover, since each of $K_{t-1}, K_t$, and $K_{t+1}$ is contractible, their homology groups satisfy $$
		H_0(K_i) \cong \mathbb{Z}, \qquad H_k(K_i) = 0 \quad \text{for all } k \ge 1,
		$$
		for $i = t-1, t, t+1$. This shows that the Morse spike occurs at $t.$
	\end{proof}
	\noindent It follows from Theorem \ref{T3.9} that the persistent homology fails to detect non-collapsibility, whereas Morse complexity captures it.
	\begin{define}[Simple $\epsilon$-interleaving]\label{D3.10}
		Let $\{K_i\}_{i\ge 0}$ and $\{L_i\}_{i\ge 0}$ be two filtered simplicial complexes. We say that $\{K_i\}_{i\ge 0}$ and $\{L_i\}_{i\ge 0}$ are simple $\epsilon$-interleaved if:
		
		\smallskip i) the filtrations are $\epsilon$-interleaved (see Definition \ref{D2.1}),

		\smallskip ii) the interleaving maps 
		$$
		\phi_t : K_t \to L_{t+\epsilon}, \quad 
		\psi_t : L_t \to K_{t+\epsilon},
		$$
		are homotopy equivalences, and
		
		\smallskip
		iii) for each $i$, there exists a levelwise simple homotopy equivalence 
		$h_i : K_i \to L_i$ such that
		the following diagram
		\[
		\begin{tikzcd}
			K_i \arrow[r,"\iota^{K}_{i,i+\epsilon}"] \arrow[d,"h_i"'] 
			& K_{i+\epsilon} \arrow[d,"h_{i+\epsilon}"] \\
			L_i \arrow[r,"\iota^{L}_{i,i+\epsilon}"'] 
			& L_{i+\epsilon}
		\end{tikzcd}
		\]
		commutes up to homotopy, i.e., 
		\begin{equation*}
			h_{i+\epsilon} \circ \iota^K_{i,i+\epsilon} \simeq \iota^L_{i,i+\epsilon} \circ h_i.
		\end{equation*}
	\end{define}
	
	\noindent
	We now show that the Morse spike persists under bounded index shift.
	\begin{theorem}
		[Conditional Stability under simple $\epsilon$-interleaving]\label{T3.11}
		Let $\{K_i\}_{i\ge 0}$ and $\{L_i\}_{i\ge 0}$ be two $\epsilon$-simple-interleaved filtrations such that for each $i$, the inclusion $\iota^{L}_{i, i+\epsilon}: L_i \longrightarrow L_{i+\epsilon}$ is a simple homotopy equivalence. 
		Then, for each $i$, the inclusion $\iota^{K}_{i, i+\epsilon}:K_i \longrightarrow K_{i+\epsilon}$ is a simple homotopy equivalence. Furthermore, for every $t$, there exists $t'\neq t$ with $|t-t'|\le \epsilon$ such that $M(K_t)=M(L_{t'})$. Hence, Morse spike persists under bounded index shift.
	\end{theorem}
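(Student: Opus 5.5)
The plan has two parts: show that the inclusions $\iota^K_{i,i+\epsilon}$ are simple, then compare Morse numbers using Theorem \ref{T3.4}.

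\emph{Step 1: the inclusions in $K_\bullet$ are simple.} Fix $i$. By condition iii) of Definition \ref{D3.10} we have simple-homotopy equivalences $h_i$ and $h_{i+\epsilon}$ with
$$h_{i+\epsilon}\circ \iota^K_{i,i+\epsilon}\simeq \iota^L_{i,i+\epsilon}\circ h_i.$$
\begin{itemize}
\item Choose a homotopy inverse $\bar h_{i+\epsilon}$ of $h_{i+\epsilon}$. It is again a simple-homotopy equivalence, since we may reverse the sequence of collapses and expansions.
\item Then $\iota^K_{i,i+\epsilon}\simeq \bar h_{i+\epsilon}\circ\iota^L_{i,i+\epsilon}\circ h_i$.
\item The right-hand side is a composite of simple-homotopy equivalences, using the hypothesis on $\iota^L_{i,i+\epsilon}$.
\item Composites of simple-homotopy equivalences are simple. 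One sees this either by concatenating the collapse/expansion sequences, or via Theorem \ref{T2.18} together with additivity of Whitehead torsion, $\tau(g\circ f)=\tau(g)+g_*\tau(f)$.
\item Simplicity is a homotopy-invariant property of maps. The paper's definition is itself "homotopic to a composite of elementary moves", so this is immediate.
\end{itemize}
Hence $\iota^K_{i,i+\epsilon}$ is a simple-homotopy equivalence.

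\emph{Step 2: comparing Morse numbers.} Fix $t$ and take $t'=t+\epsilon$. This requires $\epsilon>0$, which guarantees $t'\neq t$; the case $\epsilon=0$ has to be excluded or treated separately, since the statement demands $t'\neq t$. The chain of equalities is
$$M(K_t)=M(L_t)=M(L_{t+\epsilon}),$$
where the first equality is Theorem \ref{T3.4} applied to $h_t$, and the second is Theorem \ref{T3.4} applied to the simple inclusion $\iota^L_{t,t+\epsilon}$. The same argument gives $M(K_t)=M(L_{t-\epsilon})$ when $t\ge\epsilon$, and also $M(K_t)=M(K_{t\pm\epsilon})$ by Step 1.

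\emph{Step 3: persistence of spikes.} The final sentence about Morse spikes is an interpretation of Steps 1 and 2. Along $\epsilon$-steps the Morse profiles of $K_\bullet$ and $L_\bullet$ agree after a shift of at most $\epsilon$. So a spike value $M(K_t)$ is realized in $L_\bullet$ within distance $\epsilon$.

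\emph{Main obstacle.} The delicate point is the bookkeeping in Step 1, specifically the claim that a map homotopic to a simple-homotopy equivalence is simple and that simple equivalences compose. With the paper's definition (a map homotopic to a composite of elementary moves), both facts follow by concatenating sequences. If that becomes awkward, I would instead argue through torsion: by Theorem \ref{T2.18} and the composition formula, $\tau(\iota^K)$ is the image of
$$\tau(\bar h_{i+\epsilon})+\tau(\iota^L)+\tau(h_i)=0.$$
The remaining subtlety is the requirement $t'\neq t$, which I would handle by fixing $\epsilon>0$ as noted in Step 2.
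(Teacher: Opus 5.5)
Your Step 1 is correct and is substantively the paper's argument. The paper also writes $\iota^K_{t,t+\epsilon}\simeq h_{t+\epsilon}^{-1}\circ\iota^L_{t,t+\epsilon}\circ h_t$. It then shows $\tau(\iota^K_{t,t+\epsilon})=0$ using homotopy invariance of torsion, the composition formula and injectivity of $(h_{t+\epsilon})_*$, and concludes with Theorem~\ref{T2.18}; that is essentially your fallback. Your primary argument is more elementary and equally valid: simple-homotopy equivalences are closed under homotopy inverses, composition and homotopy, directly from the definition. In Step 2 your chain $M(K_t)=M(L_t)=M(L_{t+\epsilon})$ uses only $h_t$ and $\iota^L_{t,t+\epsilon}$, so it does not need Step 1. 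The paper instead uses $M(K_t)=M(K_{t+\epsilon})=M(L_{t+\epsilon})$, via Step 1 and $h_{t+\epsilon}$. Your observation that $t'\neq t$ forces $\epsilon>0$ is correct; the paper passes over it.

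There is, however, a genuine gap, which your proposal shares with the paper: every Morse-number equality rests on Theorem~\ref{T3.4}, and that theorem is false. The Dunce hat $D$ is contractible, so the constant map $D\to\{v\}$ is a homotopy equivalence whose torsion lies in $\mathrm{Wh}(1)=0$. By Theorem~\ref{T2.18} it is therefore a simple-homotopy equivalence. Yet $M(D)\ge 2$ by Lemma~\ref{L3.6} and $M(\{v\})=1$ by Lemma~\ref{L3.5}. This is exactly the filtration of Theorem~\ref{T3.8}, whose levels are pairwise simple-homotopy equivalent. The proof of Theorem~\ref{T3.4} misses the case where $\tau$ is paired with a facet $\alpha\neq\sigma$ and $\sigma$ is paired with one of its own facets; deleting the free pair then leaves two unmatched cells.

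Consequently the second assertion of the statement is not just unproved but false. Take constant filtrations $K_i=D$ and $L_i=\{v\}$ with $\epsilon=1$. Let $\phi_i$ and $h_i$ be the constant map $D\to\{v\}$, and let $\psi_i$ be the inclusion of a vertex. Then all the hypotheses hold:
the composites in Definition~\ref{D2.1} are homotopic to identities because $D$ is contractible;
all maps are homotopy equivalences;
each $h_i$ is simple;
the square in Definition~\ref{D3.10} commutes strictly;
and $\iota^L_{i,i+1}=\mathrm{id}$ is simple.
But $M(K_t)\ge 2>1=M(L_{t'})$ for every $t'$. So only the first assertion (your Step 1) can be proved. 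Steps 2 and 3 cannot be repaired, because the claim they aim at fails.
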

	\begin{proof}
		We fix an index $t$.  Let $\iota^{K}_{t, t+\epsilon}:K_t\longrightarrow K_{t+\epsilon}$ and $\iota^{L}_{t, t+\epsilon}:L_t\longrightarrow L_{t+\epsilon}$ be the inclusion maps such that $\iota^{L}_{t, t+\epsilon}$ is a simple homotopy equivalence. We first show that $\iota^{K}_{t,t+\epsilon} : K_t \to K_{t+\epsilon}$ is a homotopy equivalence. From Definition \ref{D3.10} (iii),  there exists a simple homotopy equivalence $h_{t+\epsilon} : K_{t+\epsilon} \longrightarrow L_{t+\epsilon}$ such that 
		\begin{equation}\label{eq3.11.0}
			h_{t+\epsilon} \circ \iota^K_{t,t+\epsilon} \simeq \iota^L_{t,t+\epsilon} \circ h_t.
		\end{equation}
		Then,
		$
		\iota^K_{t,t+\epsilon} \simeq h_{t+\epsilon}^{-1} \circ \iota^L_{t,t+\epsilon} \circ h_t.
		$
		Since $h_t$, $h_{t+\epsilon}^{-1}$, and $\iota^L_{t,t+\epsilon}$ are homotopy equivalences, it follows that $\iota^K_{t,t+\epsilon}$ is a homotopy equivalence. 
		
		\smallskip
		\noindent 
		We now show that $\iota^K_{t,t+\epsilon}$ is in fact a simple homotopy equivalence. On taking torsion of equation \eqref{eq3.11.0}, it follows from \cite[Statement 22.1]{Coh} that
		\begin{equation}\label{eq3.11a}
			\tau(h_{t+\epsilon}\circ \iota^{K}_{t,t+\epsilon})=\tau(\iota^{L}_{t,t+\epsilon}\circ h_t).
		\end{equation}
		Now, by the composition formula of the Whitehead torsion 
		(see \cite[Statement 22.4]{Coh}), we get
		\begin{align}\label{eq3.11b}
			\tau(h_{t+\epsilon}\circ \iota^{K}_{t,t+\epsilon})&=\tau(h_{t+\epsilon}) + (h_{t+\epsilon})_{\ast}(\tau(\iota^{K}_{t,t+\epsilon})),\\\notag
			\tau(\iota^{L}_{t,t+\epsilon}\circ h_t)&=\tau(\iota^{L}_{t,t+\epsilon}) + (\iota^L_{t,t+\epsilon})_{\ast}(\tau(h_{t})),
		\end{align}
		where $(h_{t+\epsilon})_{\ast}: \textup{Wh}(\pi_1(K_{t+\epsilon}))\longrightarrow \textup{Wh}(\pi_1(L_{t+\epsilon}))$ and $(\iota^{L}_{t,t+\epsilon})_{\ast}: \textup{Wh}(\pi_1(L_{t}))\longrightarrow \textup{Wh}(\pi_1(L_{t+\epsilon}))$ are the isomorphisms induced by $h_{t+\epsilon}$ and $\iota^{L}_{t,t+\epsilon}$, respectively. Since $h_{t}, h_{t+\epsilon}$ and $\iota^{L}_{t,t+\epsilon}$ are simple homotopy equivalences, by Theorem \ref{T2.18}, the Whitehead torsion 
		$$
		\tau(h_t)=\tau(h_{t+\epsilon}) = \tau(\iota^{L}_{t,t+\epsilon})=0.
		$$
		Therefore, equation \eqref{eq3.11b} reduces to
		\begin{align*}
			\tau(h_{t+\epsilon}\circ \iota^{K}_{t,t+\epsilon})&= (h_{t+\epsilon})_{\ast}(\tau(\iota^{K}_{t,t+\epsilon})),\\\notag
			\tau(\iota^{L}_{t,t+\epsilon}\circ h_t)&=0
		\end{align*}
		Now, using equation \eqref{eq3.11a}, we get $$ \tau(h_{t+\epsilon}\circ \iota^{K}_{t,t+\epsilon})= (h_{t+\epsilon})_{\ast}(\tau(\iota^{K}_{t,t+\epsilon}))=0.$$ 
		Since $(h_{t+\epsilon})_{\ast}$ is an isomorphism, $\tau(\iota^{K}_{t,t+\epsilon})=0.
		$
		Hence, by Theorem \ref{T2.18}, 
		$\iota^{K}_{t,t+\epsilon} : K_t \to K_{t+\epsilon}$ is a simple homotopy equivalence.
		
		\smallskip
		\noindent
		Finally, as minimal Morse number is invariant under simple-homotopy equivalence (see Theorem \ref{T3.4}), we obtain
		$$
		M(K_t) = M(K_{t+\epsilon})\quad \textup{and}~~\qquad~M(K_{t+\epsilon}) = M(L_{t+\epsilon}).
		$$
		Therefore,$$
		M(K_t) = M(L_{t+\epsilon}).
		$$
		
		\noindent On setting $t' = t+\epsilon$, we have that for every $t$ there exists $t'$ with $|t-t'| \le \epsilon$ such that
		$$
		M(K_t) = M(L_{t'}).
		$$
		This completes the proof.
	\end{proof}

	\section{Algorithmic framework}\label{S4} 
	\noindent  
	In this section, we study the computation of the Morse complexity profile
	along a filtration of simplicial complexes. Using greedy acyclic
	matching we can track the
	number of critical simplices appearing in the filtration. The 
	results in this section will give the computational cost of this procedure.
	
	\smallskip
	\noindent 
	Let $K$ be a finite simplicial complex, and let $\mathcal{K}$ denote its face poset. It is convenient to encode $\mathcal K$ via the Hasse diagram of $K$, viewed as a directed graph whose vertices correspond to simplices and whose edges represent the face relation, i.e., there is a directed edge from $\sigma$ to $\tau$ whenever $\tau$ is a codimension-one face of $\sigma$.
	
	\smallskip
	\noindent
	Let $V$ be a discrete vector field on $K$, viewed as a matching on the face poset. Following Forman \cite[\S6, pp 21]{RB02}, modify the Hasse diagram by reversing the orientation of edges corresponding to matched pairs in $V$. It now follows that $V$ contains no non-trivial closed $V$-paths if and only if the corresponding directed Hasse diagram contains no non-trivial directed cycles. Consequently, a discrete vector field is a gradient vector field precisely when the associated matching is acyclic (see \cite{CH}).
	
	
	\begin{theorem}\label{T4.1}
		Let
		$$
		K_\bullet:= K_0 \subseteq K_1 \subseteq \cdots \subseteq K_n
		$$
		be a finite filtration of simplicial complexes, and let
		$$
		N=\sum_{t=1}^{n}|K_t\setminus K_{t-1}|
		$$
		be the total number of simplices that appear in the filtration.
		Then, a greedy acyclic matching algorithm
		computes, for every $t$, a number $C(K_t)$ equal to the number
		of critical simplices in the constructed matching. Moreover, the total
		additional time complexity of this computation is $O(N)$ beyond the cost
		of constructing the filtration.
	\end{theorem}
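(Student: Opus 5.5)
We will prove Theorem \ref{T4.1} by exhibiting an explicit incremental algorithm and checking two things: it always produces an acyclic matching, and its total extra cost is linear in $N$. We process simplices in filtration order. Within each stage $K_t\setminus K_{t-1}$ we order the new simplices by nondecreasing dimension, so every intermediate set is a subcomplex. We maintain a matching $V_t$ on $K_t$ with $V_{t-1}\subseteq V_t$, together with a counter $C$ of unmatched simplices. When a new simplex $\tau$ of dimension $k+1$ is inserted, the greedy rule is as follows. Scan the codimension-one faces $\sigma\subset\tau$. If some $\sigma$ is currently unmatched and $\sigma$ has no other coface present at this moment (so $\sigma$ was critical and $\tau$ is its unique coface), match $(\sigma,\tau)$ and decrease $C$ by one. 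Otherwise declare $\tau$ critical and increase $C$ by one. At the end of stage $t$ we record $C(K_t):=C$. By construction $C(K_t)$ equals the number of unmatched, i.e.\ critical, simplices of $V_t$ on $K_t$.

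For acyclicity we use the reformulation recalled just before the theorem: $V_t$ is a gradient vector field if and only if the modified Hasse diagram has no directed cycle. We argue by induction on insertions. Suppose a directed cycle used the new vertex $\tau$. If $\tau$ is unmatched, all edges at $\tau$ point downward to its faces, and no edge enters $\tau$ because $\tau$ is maximal in the current complex; hence no cycle passes through $\tau$. If $\tau$ is matched with $\sigma$, then the only edge entering $\tau$ is the reversed edge from $\sigma$. The only edges entering $\sigma$ come from its cofaces, and $\tau$ is its unique coface. So the only way into $\sigma$ is from $\tau$, and any cycle would have to be $\tau\to\sigma\to\tau$, which is trivial. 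This is precisely the free-face situation of an elementary expansion, and it mirrors Case 3 in the proof of Theorem \ref{T3.4}.

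One point needs care. Later insertions can add cofaces of $\sigma$ after $\sigma$ has been matched. This creates new edges from $\tau'$ down to $\sigma$, followed by the reversed edge $\sigma\to\tau$. However, $\tau'$ is maximal at the moment of its insertion, so nothing points into $\tau'$ and no cycle can close through it. Hence acyclicity is preserved under every insertion, and by Theorem \ref{T2.10} each $V_t$ comes from a discrete Morse function on $K_t$. In particular $C(K_t)\ge M(K_t)$, which justifies calling $C$ an approximation of the profile.

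For the complexity we charge the work to insertions. Each insertion scans the boundary of $\tau$, which has $\dim\tau+1\le d+1$ faces, where $d=\dim K$. For each face it checks a stored matched flag and a stored coface counter, and it then updates those counters. All of this is $O(d)$ per simplex, which is $O(1)$ once the dimension is fixed. Summing over all simplices gives $O(N)$ total, plus $O(n)\le O(N)$ for recording the counter at each stage. Face lookup comes from the boundary data already produced when the filtration is built, and this is why we state the bound as being "beyond the cost of constructing the filtration."

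The main obstacle is the acyclicity invariant under later insertions, that is, making sure that pairs matched early are not invalidated by cofaces that arrive later. I will handle this with the maximality argument above: a newly inserted simplex has in-degree zero in the modified Hasse diagram, so it can never lie on a cycle. I would state this as a separate claim before the complexity count.
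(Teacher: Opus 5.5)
Your proof is correct. It shares the paper's overall plan: process each simplex once in filtration order, do bounded work per insertion, and keep a counter of critical simplices in which each simplex changes status at most once. You handle the acyclicity step differently, though.

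The paper lets a new simplex pair with any unmatched face or coface. It asserts that acyclicity can be tested by an $O(1)$ local check because every simplex has $O(1)$ incident faces and cofaces. That is not justified in general, because a vertex can have arbitrarily many cofaces and a closed $V$-path can be long. For example, matching $(a,ab)$, $(b,bc)$ and then $(c,ca)$ on the boundary of a triangle closes the path $a,ab,b,bc,c,ca,a$. This cannot be seen from $c$ and $ca$ alone, and the same happens on an $n$-gon.

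You instead commit to a specific rule: pair the new simplex $\tau$ only with an unmatched face $\sigma$ whose unique present coface is $\tau$. This restriction is exactly what makes a genuinely local argument work. Since $\tau$ is maximal when inserted, it either has in-degree zero or its only in-neighbour is $\sigma$, which itself has in-degree zero. Every new edge is incident to $\tau$, so no new cycle can arise. Because you scan only the at most $d+1$ faces of $\tau$ and read a stored coface counter, the per-insertion cost really is $O(d)$.

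The price is a more conservative matching. Your rule rejects some pairs that a global acyclicity test would accept, so $C(K_t)$ can be larger. For instance, after $(a,ab)$ and $(c,cd)$ it refuses $(b,bc)$ on a path. But the theorem only claims that some greedy rule computes $C(K_t)$ in linear additional time, and your version proves this together with $C(K_t)\ge M(K_t)$.

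One wording fix: in the modified Hasse diagram the edge between $\sigma$ and $\tau$ is reversed. So there is no edge $\tau\to\sigma$ and no ``cycle $\tau\to\sigma\to\tau$'' to dismiss. The clean statement is that $\sigma$ has in-degree zero: it was unmatched, so nothing enters it from below, and its only coface $\tau$ is now an out-neighbour.
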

	\begin{proof}
		We note that each simplex of the filtration appears exactly once, namely at the first
		index $t$ for which it belongs to $K_t$. Therefore, $N$ equals the total number
		of simplices processed by the algorithm. Let $d$ be the
		maximal dimension of the complexes in the filtration. Since a $d$-simplex has at most
		$d+1$ faces of dimension $d-1$, the number of incidences per simplex is bounded.
		
		\smallskip
		\noindent 
		When a simplex $\sigma$ is inserted, it is compared with its incident faces and cofaces to determine whether there exists an unmatched face or coface $\tau$ such that $(\sigma,\tau)$ can be added to the matching. The key point is to ensure that adding such a pair preserves acyclicity. This is characterized by the associated Hasse diagram. A matching is acyclic if and only if the directed Hasse diagram contains no non-trivial directed cycles. Since the matching is constructed incrementally, the diagram is acyclic prior to inserting each new pair. To verify that adding $(\sigma,\tau)$ preserves acyclicity, it suffices to check locally whether a directed cycle involving $\sigma$ and $\tau$ is created. As each simplex has only $O(1)$ incident faces and cofaces, this check can be performed in $O(1)$ time.
		
		\smallskip
		\noindent
		For each $K_t$, we record the number of critical
		simplices, $C(K_t)$, in the matching, which requires only constant time updates. When a simplex is inserted, the counter is updated depending on whether the simplex is matched or declared critical. If the simplex is later matched upon insertion of a coface, the counter is decreased accordingly. Each simplex changes its status at most once, so the total number of counter updates is $O(N)$. 
		Combining the above, all operations require $O(N)$ time. Therefore, the total additional time complexity is $O(N)$.
	\end{proof}
	
	\begin{remark}
		The value $C(K_t)$ obtained by the greedy algorithm equals the
		number of critical simplices in the constructed matching. In general it
		is only an approximation to the minimal Morse number $M(K_t)$, since
		computing an optimal discrete Morse matching is NP-hard.
	\end{remark}
	\begin{theorem}\label{T4.3} Let
		$$
		K_\bullet:= K_0 \subseteq K_1 \subseteq \cdots \subseteq K_n
		$$
		be a finite filtration of simplicial complexes with $N$ total simplices. Then the Morse complexity profile can be approximated in $O(N)$ times beyond the cost of persistent homology.
	\end{theorem}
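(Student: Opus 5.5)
The plan is to reduce the statement to Theorem \ref{T4.1} and check that the extra bookkeeping is linear. First I fix what is being computed. The approximation of the profile $\mathcal{M}(K_\bullet)=\{M(K_t)\}_t$ will be the sequence $\{C(K_t)\}_t$ produced by the incremental greedy acyclic matching of Theorem \ref{T4.1}. This is a genuine upper approximation: the matching built at stage $t$ is an acyclic matching on $K_t$. By Theorem \ref{T2.10} it is therefore the gradient field of a discrete Morse function on $K_t$, which gives $M(K_t)\le C(K_t)$ for every $t$. The weak Morse inequalities, Theorem \ref{T2.12}, give the lower bound $C(K_t)\ge M(K_t)\ge \sum_k b_k(K_t)$. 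This lower bound is read off from the persistent homology computation we are assuming has already been carried out, and it lets us certify the approximation at each level.

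Second, I account for costs in the order the algorithm runs. Simplices are processed in filtration order, the same order the standard persistence algorithm (column reduction) already uses, so no extra sorting is needed. By Theorem \ref{T4.1}, all matching decisions, acyclicity checks and counter updates take $O(N)$ time in total. This uses the fact that, for fixed maximal dimension $d$, each simplex has at most $d+1$ facets, so the local work per simplex is $O(1)$.

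Third, I record the profile. Each $C(K_t)$ is the value of a running counter at the end of block $t$. Writing it out costs $O(1)$ per index, and since $n\le N$ (we may discard empty steps, or note that the filtration cost already includes $n$), this is $O(N)$ overall. Comparing $C(K_t)$ with the Betti sum $\sum_k b_k(K_t)$ obtained from the barcode is also $O(1)$ per level once the barcode has been computed. Summing these contributions gives $O(N)$ beyond the cost of persistent homology.

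The step I expect to be the main obstacle is the $O(1)$ acyclicity check inherited from Theorem \ref{T4.1}. In general, a new pair $(\sigma,\tau)$ can close a long directed cycle in the modified Hasse diagram, so a purely local check is not obviously correct. My first attempt at the fix is to restrict the greedy step so that it only matches a newly inserted simplex $\sigma$ with a currently unmatched facet $\tau$ that has no other coface yet present. At the moment of insertion such a $\tau$ is a free face, so the pair amounts to an elementary expansion, and expansions never create closed $V$-paths; this keeps the check local. The price is that the approximation is coarser, but it is still a valid upper bound by the first step.
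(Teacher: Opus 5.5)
Your proposal is correct. Its skeleton is the paper's: run the incremental greedy matching of Theorem \ref{T4.1} alongside the standard reduction, keep a running count of critical simplices, and compare $C(K_t)$ with the Betti numbers from the barcode.

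\textbf{Where you differ.} The real difference is at the acyclicity step. The paper's proof simply invokes Theorem \ref{T4.1}, whose argument makes two claims: each simplex has $O(1)$ incident faces and cofaces, and a local test therefore preserves acyclicity. The coface bound fails, since a vertex can have arbitrarily many edges. A purely local test can also miss cycles. For example, if $(w,wx)$ and $(x,xu)$ are already matched and $u$ is critical, pairing $u$ with a newly inserted edge $uw$ creates the closed $V$-path $u,uw,w,wx,x,xu,u$. The analogous cycle for an $m$-gon contains $2m$ simplices, so no bounded-radius check detects it.

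\textbf{Why your fix works.} Your expansion-only rule removes this problem. Once $(\tau,\sigma)$ is added with $\tau$ unmatched and having no coface other than $\sigma$, the node $\tau$ has no incoming edge in the modified Hasse diagram. The node $\sigma$ is entered only from $\tau$, so no directed cycle can pass through either. The test inspects only the at most $d+1$ facets of $\sigma$. This requires maintaining a coface counter for each simplex, incremented at $O(d)$ cost per insertion; you should say so explicitly.

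\textbf{What each route buys.} Your route gives a rigorous $O(dN)$ bound. It also gives the certified sandwich $\sum_k b_k(K_t)\le M(K_t)\le C(K_t)$ via Theorems \ref{T2.10} and \ref{T2.12}, which the paper does not make explicit. The price is a coarser count. Inserting the edges of the path on $a,b,c,d$ in the order $ab$, $cd$, $bc$ leaves three critical simplices, although $M=1$. An unrestricted greedy with a correct global check would instead pair $b$ with $bc$. The statement fixes no approximation guarantee, and the paper's greedy has none either, so this is not a gap.

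\textbf{Minor bookkeeping.} The Betti sums for all $t$ are obtained by one sweep over the barcode, or from the running count of positive minus negative simplices in the reduction. That costs $O(N+n)$ in total, not literally $O(1)$ per level without preprocessing.
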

	\begin{proof}
		We first compute the persistent homology of the filtration $K_\bullet$
		using a standard reduction algorithm. Then, we update a greedy acyclic
		matching as simplices are inserted along the filtration. By Theorem \ref{T4.1}, the greedy matching processes each simplex once and
		inspects only a bounded number of face-coface incidences. Since the
		filtration contains $N$ simplices in total, this step requires $O(N)$
		time.
		
		\smallskip
		\noindent
		For each $t\ge 0,$ let $\beta_k(K_t) = \dim (H_k(K_t))$. On comparing the sequence $\{C(K_t)\}_{t}$ with the Betti numbers$\{\beta_k(K_t)\}_{t}$ obtained from persistent homology, we can detect
		filtration levels where the homology groups remain unchanged while the
		number of critical simplices increases. Thus, maintaining a greedy acyclic matching along the filtration yields an approximation of the Morse complexity profile with an additional cost linear in the number of simplices. This provides a computable proxy for Morse-type information beyond persistent homology.
	\end{proof}
	\noindent We now illustrate Theorem \ref{T4.1} and Theorem \ref{T4.3} using following examples. 
	\begin{example}
		Let
		$
		K_\bullet:= K_0 \subset K_1 \subset K_2
		$
		be the filtration constructed in Theorem \ref{T3.8} as
		$$
		K_0=\{v\}, \qquad
		K_1=D \ (\text{the Dunce hat}), \qquad
		K_2=\mathrm{Cone}(D).
		$$ The filtration
		$K_0 \subset K_1 \subset K_2$ contains only three simplices, so
		the greedy acyclic matching algorithm of Theorem \ref{T4.1} computes the
		values $C(K_t)$ in $O(3)$ time. Running greedy acyclic matching along the filtration yields
		$C(K_0)=1$, since $K_0$ consists of a single vertex which
		cannot be matched. For $K_1$, which is the Dunce hat $D$, the complex
		is contractible but not collapsible. Hence any discrete Morse matching
		must leave at least two simplices critical, so
		$C(K_1)\ge 2$. Finally, $K_2=\mathrm{Cone}(D)$ is collapsible,
		and therefore admits a discrete Morse matching with exactly one critical
		simplex. Consequently $C(K_2)=1$.
		
		\smallskip\noindent On the other hand, the persistent homology of the filtration is
		trivial since each $K_i$ is contractible. Therefore, the Betti numbers
		$\beta_k(K_t)$ remain constant for all $t$, and thus the Morse spike occus at $K_1$. This shows that the Morse complexity profile for this
		filtration can be approximated with only $O(3)$ additional work beyond
		the computation of persistent homology.
	\end{example}
	\begin{example}[Vietoris-Rips filtration]
		Let $(X,d)$ be a finite metric space and let $\varepsilon \ge 0$. 
		The Vietoris-Rips complex $VR_{\varepsilon}(X)$ is the simplicial
		complex whose vertex set is $X$ and such that a finite subset
		$\{x_0,\dots,x_k\}\subset X$ spans a $k$-simplex whenever
		$$
		d(x_i,x_j)\le \varepsilon \quad \text{for all } i,j .
		$$ \noindent For an increasing sequence of parameters
		$$
		0<\varepsilon_0<\varepsilon_1<\cdots<\varepsilon_T,
		$$
		these complexes form a filtration
		$$
		VR_{\varepsilon_0}(P)\subset VR_{\varepsilon_1}(P)\subset\cdots\subset
		VR_{\varepsilon_T}(P).
		$$
		
		
		\smallskip
		\noindent
		For instance, let $P=\{p_1,p_2,p_3,p_4,p_5\}$ be five points arranged roughly on a circle.
		For scale parameters $\varepsilon_0<\varepsilon_1<\varepsilon_2$, we consider the following
		Vietoris-Rips filtration
		$$
		VR_{\varepsilon_0}(P)\subset VR_{\varepsilon_1}(P)\subset VR_{\varepsilon_2}(P).
		$$
		
		\noindent At $t=0,$ let $K_0:=VR_{\varepsilon_0}(P)$ be the complex consisting only of
		five isolated vertices. Hence $\beta_0(K_0)=5$ and $\beta_1(K_0)=0$.
		The greedy acyclic matching leaves all vertices critical, giving
		$C(K_0)=5$. At $t=1$, let $K_1:=VR_{\varepsilon_1}(P),$ where the edges of a pentagon
		appear, forming a single cycle. 
		Then, by the weak Morse inequalities Theorem \ref{T2.12}, any discrete Morse function must have
		at least one critical $0$-simplex and one critical $1$-simplex.
		Therefore, a greedy matching can pair four vertices with four incident edges,
		leaving one vertex and one edge unmatched. Thus
		$C(K_1)=2$. Finally, at $t=2$, let $K_2:=VR_{\varepsilon_2}(P)$, where the triangles fill the cycle,
		making the complex contractible. Hence $\beta_0(K_2)=1$ and
		$\beta_1(K_2)=0$. The greedy matching collapses the complex to a
		single vertex, giving $C(K_2)=1$. This example shows that the Morse complexity
		profile $\{C(K_t)\}$ can be obtained along the filtration with
		only $O(N)$ additional work beyond persistent homology, where $N$ is the total number of simplices in the filtration.
	\end{example}
	
	\section{From Morse complexity to Whitehead torsion}\label{S5}
	\noindent In Section \ref{S3}, we used Morse theory as a computable proxy for simple-homotopy phenomena. Classical simple-homotopy theory, following Whitehead, provides a deeper invariant associated to a homotopy equivalence $f: K \to L$, namely its Whitehead torsion
	$\tau(f) \in \mathrm{Wh}(\pi_1(L))$ (see Subsection \ref{Sub2.3}).
	Persistent Whitehead torsion provides a strictly finer invariant by assigning to each inclusion $\iota_{s,t}^K:K_s\longrightarrow K_t$ the obstruction
	$$
	\tau(\iota_{s,t}) \in \mathrm{Wh}(\pi_1(K_t)),
	$$
	which vanishes if and only if the inclusion is a simple-homotopy equivalence. Conceptually, Morse theory and Whitehead torsion capture complementary features. Morse spikes detect the failure of collapsibility, while Whitehead torsion detects the failure of simple-homotopy equivalence. In non-simply connected settings, Whitehead torsion encodes additional group-theoretic information through the Whitehead group. Motivated by this perspective, we now introduce a persistent version of Whitehead torsion along a filtration.
	
	\begin{define}[Torsion-eligible window]
		Let $\{K_t\}_{t\in I}$ be a filtration of connected finite simplicial complexes over an index set $I$. We call an interval $J \subset I$ a torsion-eligible window if for all $s \le t$ in $J$, the inclusion $\iota_{s,t} : K_s \hookrightarrow K_t$ is a homotopy equivalence.  
	\end{define}
	\noindent
	In this case, the induced map
	$
	(\iota_{s,t})_{\ast} : \pi_1(K_s) \longrightarrow \pi_1(K_t)
	$
	is an isomorphism (see \cite{AT}). Therefore, we let $\pi_1(K_t)\cong G$, a constant group $G$ for all $t\in J$, and the Whitehead torsion is defined for the maps $\iota_{s,t}$. We use the same notation $(\iota_{s,t})_{\ast}$ for the induced maps on fundamental groups and on Whitehead torsion groups. The meaning will be clear from context.
	\begin{define}[Persistent torsion cocycle]\label{D5.2}
		Let $\{K_t\}_{t\in I}$ be a filtration of connected finite simplicial complexes over an index set $I$ and let $J\subset I$ be a torsion-eligible window. For each $s \le t$ in $J$, we define
		$$
		\tau^K_{s,t} := \tau(\iota^K_{s,t}) \in \textup{Wh}(\pi_1(K_t)).
		$$
		Since $
		\iota^K_{s,u} = \iota^K_{t,u} \circ \iota^K_{s,t},
		$
		by the composition formula for Whitehead torsion (see \cite[Statement 22.4]{Coh}), we obtain
		$$\tau(\iota^K_{s,u}) = \tau(\iota^K_{t,u}) + (\iota^K_{t,u})_{\ast}(\tau(\iota^K_{s,t})),$$ where
		$
		(\iota^K_{t,u})_{\ast} : \mathrm{Wh}(\pi_1(K_t)) \to \mathrm{Wh}(\pi_1(K_u))
		$
		is the map induced by $\iota^K_{t,u}$.
		On a torsion-eligible window, the map induced by $\iota_{s,t}$ on fundamental groups is an isomorphism, therefore we identify all fundamental groups with $G$. Under these identifications, the maps $(\iota^K_{t,u})_{\ast}$ induced by automorphisms of $G$ correspond to automorphisms of $\mathrm{Wh}(G)$. Note that different identifications of $\pi_1(K_t)$ with $G$ differ by inner automorphisms. Since inner automorphisms act trivially on $\mathrm{Wh}(G)$, $(\iota^K_{t,u})_{\ast}=\mathrm{id}$ (see \cite[Lemma 6.1]{JM}).
		Hence,
		$$
		\tau^K_{s,u} = \tau^K_{t,u} + \tau^K_{s,t}.
		$$
		Let $P(J)$ be the poset category of indices in $J$. Then, this defines a $1$-cocycle $$
		\tau^K \in Z^1(P(J); \mathrm{Wh}(G)).
		$$
		\noindent We refer to $\tau^K$ as the persistent torsion cocycle.
	\end{define}
	\begin{define}[Persistent Whitehead torsion]\label{D5.3}
		Let $K_\bullet=\{K_t\}_{t\in I}$ be a filtration of connected finite simplicial complexes and $J\subset I$ be a torsion-eligible window. Let $P(J)$ be the poset category of indices in $J$. The persistent torsion cocycle defines a class
		$$
		\mathcal{T}(K_\bullet) := [\tau] \in H^1(P(J);\mathrm{Wh}(G)).
		$$
		We refer to the cohomology class $\mathcal{T}(K_\bullet)$ as the persistent Whitehead torsion of the filtration.
	\end{define}
	\noindent
	Note that, in contrast to persistent homology, which is a functor in 
	$
	Fun(P(J) \longrightarrow \mathrm{Vect}),
	$
	taking values in the category of vector spaces, the persistent torsion takes values in $ \textup{Wh}(G)$. We now show that the persistent torsion invariant is preserved under a levelwise simple-homotopy equivalence of filtrations.
	\begin{theorem}\label{T5.4}
		Let $f_\bullet : K_\bullet=\{K_t\}_{t\in I} \longrightarrow \{L_t\}_{t\in I}=L_\bullet$ be a map of filtrations of connected finite simplicial complexes and $J\subset I$ be a torsion-eligible window for both $K$ and $L$. Further, for each $t$, let $f_t : K_t \longrightarrow L_t$ be a simple-homotopy equivalence. Then,
		$$
		\tau^K_{s,t} = \tau^L_{s,t} \quad \text{for all } s \le t \text{ in } J.
		$$
		Consequently,
		$
		\mathcal{T}(K_\bullet) = \mathcal{T}(L_\bullet).
		$
	\end{theorem}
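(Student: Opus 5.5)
The plan is to repeat the torsion argument from the proof of Theorem \ref{T3.11}, now applied to the strictly commuting square of a filtration map. Fix $s\le t$ in $J$. Since $f_\bullet$ is a map of filtrations, the restrictions satisfy
$$
f_t\circ \iota^K_{s,t} = \iota^L_{s,t}\circ f_s
$$
on the nose, because both sides equal $f|_{K_s}$ viewed as a map into $L_t$. In particular the two composites are homotopic. Both are homotopy equivalences: $J$ is torsion-eligible for $K$ and $L$, and $f_s,f_t$ are simple-homotopy equivalences. Hence both composites have well-defined Whitehead torsion in $\mathrm{Wh}(\pi_1(L_t))$, and by homotopy invariance \cite[Statement 22.1]{Coh} these torsions agree.

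Next I would expand both sides with the composition formula \cite[Statement 22.4]{Coh}:
$$
\tau(f_t)+(f_t)_\ast\tau(\iota^K_{s,t}) = \tau(\iota^L_{s,t})+(\iota^L_{s,t})_\ast\tau(f_s).
$$
Since $f_s$ and $f_t$ are simple-homotopy equivalences, Theorem \ref{T2.18} gives $\tau(f_s)=\tau(f_t)=0$. This leaves
$$
(f_t)_\ast\tau^K_{s,t}=\tau^L_{s,t}.
$$

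It then remains to interpret $(f_t)_\ast$ under the identifications with $G$. I expect this to be the main subtle step. The map $f_t$ induces an isomorphism $\pi_1(K_t)\to\pi_1(L_t)$, so $(f_t)_\ast:\mathrm{Wh}(\pi_1(K_t))\to \mathrm{Wh}(\pi_1(L_t))$ is an isomorphism. For the statement $\tau^K_{s,t}=\tau^L_{s,t}$ to make sense, the fundamental groups of $K$ and $L$ along $J$ must be compared through a common group $G$. The natural choice is to transport the identification for $K$ to $L$ via $f_\bullet$. Concretely: fix the identification $\pi_1(K_{t_0})\cong G$ at some $t_0\in J$, propagate it along $J$ using the inclusions, and define the identification for $L$ by composing with $(f_{t})_\ast$. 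The strict naturality $f_t\circ\iota^K_{s,t}=\iota^L_{s,t}\circ f_s$ makes these identifications compatible with the inclusions on the $L$ side. Any remaining ambiguity, such as basepoint choices, is by inner automorphisms, which act trivially on $\mathrm{Wh}(G)$ \cite[Lemma 6.1]{JM}, as in Definition \ref{D5.2}. With these identifications $(f_t)_\ast=\mathrm{id}_{\mathrm{Wh}(G)}$, and therefore $\tau^K_{s,t}=\tau^L_{s,t}$ for all $s\le t$ in $J$.

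Finally, the two cocycles coincide as elements of $Z^1(P(J);\mathrm{Wh}(G))$, so their classes agree and $\mathcal{T}(K_\bullet)=\mathcal{T}(L_\bullet)$ in $H^1(P(J);\mathrm{Wh}(G))$. In writing this up I would state explicitly that the equality is understood under the identification of $G$ induced by $f_\bullet$, since that is the only step that needs care.
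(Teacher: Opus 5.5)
Your proposal is correct and follows the paper's proof essentially step by step. Both arguments use homotopy invariance of torsion on the commuting square, expand with the composition formula, and use $\tau(f_s)=\tau(f_t)=0$ to obtain $(f_t)_\ast\tau^K_{s,t}=\tau^L_{s,t}$; they then identify $(f_t)_\ast$ with the identity on $\mathrm{Wh}(G)$ because inner automorphisms act trivially. Two points in your version make precise what the paper leaves implicit: the square commutes strictly rather than only up to homotopy, and the identification of $G$ is transported to $L_\bullet$ via $f_\bullet$.
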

	\begin{proof}
		Since $f_{\bullet}:K_\bullet=\{K_t\}_{t\in I} \longrightarrow \{L_t\}_{t\in I}=L_\bullet$ is a filtration map, we have 
		$$
		f_t \circ \iota^K_{s,t} \simeq \iota^L_{s,t} \circ f_s
		$$
		for all $s \le t$. Then, by \cite[Statement 22.1]{Coh}, we have
		\begin{equation}\label{eq5.1}
			\tau(f_t \circ \iota^K_{s,t}) = \tau(\iota^L_{s,t} \circ f_s).
		\end{equation}
		\noindent Further, by the composition formula of the Whitehead torsion \cite[Statement 22.4]{Coh}, we get 
		\begin{equation*}
			\tau(f_t \circ \iota^K_{s,t}) 
			= \tau(f_t) + (f_t)_{\ast}(\tau(\iota^K_{s,t})) \quad \textup{and}\quad  \tau(\iota^L_{s,t} \circ f_s) 
			= \tau(\iota^L_{s,t}) + (\iota^L_{s,t})_{\ast}(\tau(f_s)),
		\end{equation*}
		where $(f_t)_{\ast} : \mathrm{Wh}(\pi_1(K_t)) \longrightarrow \mathrm{Wh}(\pi_1(L_t))$ and $(\iota^L_{s,t})_{\ast} : \mathrm{Wh}(\pi_1(L_s)) \longrightarrow \mathrm{Wh}(\pi_1(L_t))$ are the isomorphisms induced by $f_t$ and $\iota^L_{s,t},$ respectively.
		Since $f_s$ and $f_t$ are simple-homotopy equivalences, $
		\tau(f_s) = \tau(f_t) = 0.
		$ Thus,
		\begin{equation}\label{eq5.2}
			\tau(f_t \circ \iota^K_{s,t}) = (f_t)_{\ast}(\tau^K_{s,t}) \quad \textup{and}\quad 
			\tau(\iota^L_{s,t} \circ f_s) = \tau^L_{s,t}.
		\end{equation}
		Then, from equations \eqref{eq5.1} and \eqref{eq5.2}, we obtain
		\begin{equation}\label{eq5.3}
			(f_t)_{\ast}(\tau^K_{s,t}) = \tau^L_{s,t}.
		\end{equation}
		On the torsion-eligible window, the inclusion maps induce isomorphisms
		$$
		\pi_1(K_s) \cong \pi_1(K_t), \quad \pi_1(L_s) \cong \pi_1(L_t)
		$$
		for all $s\le t$. Moreover, since each $f_t : K_t \to L_t$ is a homotopy equivalence, it induces an isomorphism on fundamental groups (see \cite[Proposition 1.18]{AT})
		$$
		(f_t)_{\ast} : \pi_1(K_t) \longrightarrow \pi_1(L_t).
		$$
		Using these isomorphisms, we identify all fundamental groups with a fixed group $G$. Then the induced maps 
		$$
		(f_t)_{\ast} : \mathrm{Wh}(\pi_1(K_t)) \longrightarrow \mathrm{Wh}(\pi_1(L_t))
		$$
		are well-defined up to inner automorphism. Since inner automorphisms act trivially on $\mathrm{Wh}(G)$, we may identify $(f_t)_*$ with the identity on $\mathrm{Wh}(G)$. It now follows from equation \eqref{eq5.3} that
		$$
		\tau^K_{s,t} = \tau^L_{s,t}
		$$
		for all $s \le t$ in $J$. Therefore, their cohomology classes 
		$$
		\mathcal{T}(K_\bullet) = \mathcal{T}(L_\bullet).
		$$
		This completes the proof.
	\end{proof}
	\noindent
	We now show that persistent Whitehead torsion is invariant under interleaving equivalence of filtrations. In particular, it is stable under perturbations of the filtration.
	\begin{theorem}[Interleaving invariance of persistent torsion]\label{T5.5}
		Let $K_\bullet = \{K_t\}_{t \in I}$ and $L_\bullet = \{L_t\}_{t \in I}$ be filtrations of connected finite simplicial complexes and $J\subset I$ be a torsion-eligible window for both $K$ and $L$. Let
		
		\smallskip
		\noindent i) the filtrations are $\epsilon$-interleaved via the interleaving maps
		$$
		\phi_t : K_t \to L_{t+\epsilon}, \quad 
		\psi_t : L_t \to K_{t+\epsilon},
		$$
		which are homotopy equivalences, and
		
		\smallskip
		\noindent ii) the interleaving maps are compatible with the filtrations up to homotopy, i.e.,
		for all $s \le t$ in $I$,
		\begin{equation}\label{eq5.4}
			\phi_t \circ \iota^K_{s,t} \simeq \iota^L_{s+\epsilon,t+\epsilon} \circ \phi_s.
		\end{equation}
		\noindent
		Then, the persistent torsion cocycles $\tau^K$ and $\tau^L$ define the same cohomology class, i.e., 
		$$
		\mathcal{T}(K_\bullet) = \mathcal{T}(L_\bullet) \in H^1(P(J);\mathrm{Wh}(G)).
		$$
	\end{theorem}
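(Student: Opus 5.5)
The plan is to imitate the proof of Theorem \ref{T5.4}. The differences are that the interleaving maps $\phi_t$ are only homotopy equivalences, not simple ones, and that they shift the index by $\epsilon$. Consequently the two cocycles will not agree on the nose. Instead, I will show that $\tau^K$ and $\tau^L$ differ by an explicit coboundary in $Z^1(P(J);\mathrm{Wh}(G))$, which is exactly what equality in $H^1$ requires.

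\textbf{Step 1.} Compare $\tau^K$ with the shifted cocycle of $L$. Apply Whitehead torsion to the homotopy \eqref{eq5.4}. By \cite[Statement 22.1]{Coh}, homotopic maps have equal torsion, so
$$\tau(\phi_t\circ\iota^K_{s,t})=\tau(\iota^L_{s+\epsilon,t+\epsilon}\circ\phi_s).$$
Expanding both sides with the composition formula \cite[Statement 22.4]{Coh} gives
$$\tau(\phi_t)+(\phi_t)_*\tau^K_{s,t}=\tau^L_{s+\epsilon,t+\epsilon}+(\iota^L_{s+\epsilon,t+\epsilon})_*\tau(\phi_s).$$
Next, identify all the fundamental groups involved with $G$ via the isomorphisms induced by the inclusions and by the $\phi_t$, as in the proof of Theorem \ref{T5.4}. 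Using \cite[Lemma 6.1]{JM}, I will treat the induced maps on $\mathrm{Wh}(G)$ as the identity. Setting $c(t):=\tau(\phi_t)\in\mathrm{Wh}(G)$ then yields
$$\tau^K_{s,t}=\tau^L_{s+\epsilon,t+\epsilon}+c(s)-c(t).$$
Thus $\tau^K$ and the shifted cocycle $(s,t)\mapsto\tau^L_{s+\epsilon,t+\epsilon}$ differ by the coboundary of the $0$-cochain $c$.

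\textbf{Step 2.} Show that the shifted cocycle is cohomologous to $\tau^L$ itself, using only the cocycle identity from Definition \ref{D5.2}. Decompose $\tau^L_{s,t+\epsilon}$ in two ways, once through $t$ and once through $s+\epsilon$:
$$\tau^L_{t,t+\epsilon}+\tau^L_{s,t}=\tau^L_{s,t+\epsilon}=\tau^L_{s+\epsilon,t+\epsilon}+\tau^L_{s,s+\epsilon}.$$
With $b(t):=\tau^L_{t,t+\epsilon}$ this becomes
$$\tau^L_{s+\epsilon,t+\epsilon}=\tau^L_{s,t}+b(t)-b(s).$$
Combining with Step 1 gives $\tau^K-\tau^L=\delta(b-c)$, and hence $\mathcal{T}(K_\bullet)=\mathcal{T}(L_\bullet)$. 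Hypothesis (i) of Theorem \ref{T5.5}, and the interleaving compositions more generally, are not needed beyond guaranteeing that each $\phi_t$ is a homotopy equivalence, which is what makes $\tau(\phi_t)$ defined.

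\textbf{Main obstacle.} The delicate point is bookkeeping rather than algebra, and it has two parts.

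First, the shifted indices $s+\epsilon$ and $t+\epsilon$ must lie in a range where the inclusions of $L$ are homotopy equivalences; otherwise $b(t)$ and $\tau^L_{s+\epsilon,t+\epsilon}$ are not defined. I would handle this by assuming, or restricting to, a window with $J+\epsilon\subset J$, or equivalently by reading ``torsion-eligible for both'' as covering the shifted range. This condition should be stated explicitly.

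Second, identifying $\mathrm{Wh}(\pi_1(K_t))$ with $\mathrm{Wh}(\pi_1(L_{t+\epsilon}))$ via $(\phi_t)_*$ uses a possibly outer automorphism of $G$. I will make the identifications compatible by transporting them along the $\phi_t$. By \eqref{eq5.4}, these transports agree with the inclusion-induced identifications up to inner automorphisms, which act trivially on $\mathrm{Wh}(G)$. Checking this compatibility carefully is the step I expect to need the most care.
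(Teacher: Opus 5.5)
Your Step 1 is the paper's proof: apply torsion to the homotopy \eqref{eq5.4}, use the composition formula, and identify all Whitehead groups with $\mathrm{Wh}(G)$. This gives $\tau^K_{s,t}-\tau^L_{s+\epsilon,t+\epsilon}=\tau(\phi_s)-\tau(\phi_t)$.

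The paper stops there and declares $\mathcal{T}(K_\bullet)=\mathcal{T}(L_\bullet)$. In doing so it tacitly treats the shifted cocycle $(s,t)\mapsto\tau^L_{s+\epsilon,t+\epsilon}$ as if it were $\tau^L$. Your Step 2, with $b(t)=\tau^L_{t,t+\epsilon}$, supplies exactly this missing comparison, so your argument is the more complete one.

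Both of your caveats are real. The paper handles them only by asserting that all induced maps are defined up to inner automorphisms. Transporting the identifications along the $\phi_t$ and invoking \eqref{eq5.4}, as you propose, is what actually justifies that assertion.

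The window issue needs no extra assumption, however:
\begin{itemize}
\item In Step 1, for $s\le t$ in $J$, the homotopy \eqref{eq5.4} and two-out-of-three already make $\iota^L_{s+\epsilon,t+\epsilon}$ a homotopy equivalence.
\item For $b(t)$, use the interleaving relations $\phi_{t+\epsilon}\circ\psi_t\simeq\iota^L_{t,t+2\epsilon}$ and $\phi_{t+2\epsilon}\circ\psi_{t+\epsilon}\simeq\iota^L_{t+\epsilon,t+3\epsilon}$ together with hypothesis (i). Two-out-of-six (if $gf$ and $hg$ are homotopy equivalences, so are $f$, $g$, $h$) then shows that $\iota^L_{t,t+\epsilon}$ is a homotopy equivalence.
\end{itemize}
So the interleaving compositions you set aside are exactly what let you avoid enlarging $J$.

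Step 2 is also an instance of a general fact: since $J$ is totally ordered, every $1$-cocycle on $P(J)$ is a coboundary. Fix $s_0\in J$ and put $a_t=\tau_{s_0,t}$ for $t\ge s_0$ and $a_t=-\tau_{t,s_0}$ for $t\le s_0$. In each of the three possible orderings of $s\le t$ relative to $s_0$, the cocycle identity gives $\tau_{s,t}=a_t-a_s$.

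Hence $H^1(P(J);\mathrm{Wh}(G))=0$, both classes vanish, and the stated equality holds regardless of the interleaving hypotheses. The real content of both your argument and the paper's is the cochain-level identity of Step 1, not the cohomological conclusion.
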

	
	\begin{proof}
		Fix $s \le t$. By equation \eqref{eq5.4}, we have
		$$
		\phi_t \circ \iota^K_{s,t} \simeq \iota^L_{s+\epsilon,t+\epsilon} \circ \phi_s.
		$$
		Since Whitehead torsion is invariant under homotopy equivalence \cite[Statement 22.1]{Coh}, we get
		\begin{equation}\label{eq5.5}
			\tau(\phi_t \circ \iota^K_{s,t}) = \tau(\iota^L_{s+\epsilon,t+\epsilon} \circ \phi_s).
		\end{equation}
		Now using the composition formula for Whitehead torsion \cite[Statement 22.4]{Coh}, we obtain
		\begin{align}\label{eq5.6}
			\tau(\phi_t \circ \iota^K_{s,t}) 
			&= \tau(\phi_t) + (\phi_t)_{\ast}(\tau(\iota^K_{s,t})), \notag\\
			\tau(\iota^L_{s+\epsilon,t+\epsilon} \circ \phi_s) 
			&= \tau(\iota^L_{s+\epsilon,t+\epsilon}) + (\iota^L_{s+\epsilon,t+\epsilon})_{\ast}(\tau(\phi_s)).
		\end{align}
		Substituting equations in \eqref{eq5.6} into \eqref{eq5.5}, we obtain
		\begin{equation*}
			\tau(\phi_t) + (\phi_t)_{\ast}(\tau(\iota^K_{s,t}))
			=
			\tau(\iota^L_{s+\epsilon,t+\epsilon}) + (\iota^L_{s+\epsilon,t+\epsilon})_{\ast}(\tau(\phi_s)).
		\end{equation*}
		On the torsion-eligible window, all fundamental groups are identified with a fixed group $G$. Under these identifications, the induced maps on $\mathrm{Wh}(G)$ are well-defined up to inner automorphism, and hence act trivially. Therefore,
		\begin{equation*}
			\tau^K_{s,t} - \tau^L_{s+\epsilon,t+\epsilon}
			=
			\tau(\phi_s) - \tau(\phi_t).
		\end{equation*}
		We now define a function
		$$
		a : P(J) \longrightarrow \mathrm{Wh}(G), \quad t \longmapsto a_t := \tau(\phi_t).
		$$
		From this, we define a function on pairs $(s \le t)$ by
		$$
		b : P(J) \longrightarrow \mathrm{Wh}(G), \quad (s \le t) \longmapsto b_{s,t} := a_s - a_t.
		$$
		Then,
		$$
		\tau^K_{s,t} - \tau^L_{s+\epsilon,t+\epsilon} = b_{s,t}.
		$$
		Thus the difference between the two cocycles is given by the assignment $(s \le t) \mapsto a_s - a_t$, which is a coboundary. Hence they determine the same cohomology class
		$$
		\mathcal{T}(K_\bullet) = \mathcal{T}(L_\bullet).
		$$
		This completes the proof.
	\end{proof}
	
	\smallskip
	\noindent 
	We recall that two closed $n$-manifolds $\mathscr M_0$ and $\mathscr M_1$ are said to be cobordant if there exists an $(n+1)$-manifold $\mathscr W$ with boundary
	$$
	\partial \mathscr W = \mathscr M_0 \sqcup \mathscr M_1.
	$$
	Such a $\mathscr W$ is called a cobordism between $\mathscr M_0$ and $\mathscr M_1$. In particular, two manifolds are considered the same if their disjoint union is the boundary
	of another manifold.
	An $h$-cobordism is a cobordism $\mathscr W$ for which the inclusions
	$$
	\mathscr M_0 \hookrightarrow \mathscr W \quad \text{and} \quad \mathscr M_1 \hookrightarrow \mathscr W
	$$
	are homotopy equivalences. In dimension $\geq 5$, the $h$-cobordism theorem states that such a cobordism is trivial, i.e.\ $\mathscr W \cong \mathscr M_0 \times [0,1]$, if and only if its Whitehead torsion $\tau(\mathscr W,\mathscr M_0)$ vanishes (see \cite{Coh}).

	\begin{remark}
		Let $K_{\bullet}=\{K_t\}_{t\in I}$ be a filtration of connected finite simplicial complexes and $J\subset I$ be a torsion-eligible window. Then,     
		
		\smallskip
		\noindent
		i) For each $s \le t$ in $J$, the inclusion $\iota_{s,t}:K_s \hookrightarrow K_t$ is a homotopy equivalence and may be regarded as a discrete analogue of an $h$-cobordism between successive levels of the filtration. The associated Whitehead torsion $\tau^K_{s,t}=\tau(\iota_{s,t})$ vanishes precisely when this map is a simple homotopy equivalence. The corresponding cohomology class $\mathcal{T}(K_\bullet)$ therefore measures whether these homotopy equivalences can be simultaneously chosen to be simple, i.e.\ whether the filtration can be trivialized up to simple homotopy on the window $J$.
		
		\smallskip
		\noindent
		ii) Suppose that homology stabilizes on a window $J$, i.e., $H_{\ast}(K_s) \cong H_{\ast}(K_t)$ for all $s \le t$ in $J$. This does not in general imply that the inclusions $K_s \hookrightarrow K_t$ are simple homotopy equivalences.  The persistent torsion cocycle records the obstruction. In particular, non-vanishing torsion provides an obstruction to upgrading homological stability to simple-homotopy stability. Thus, persistent torsion refines classical persistence invariants by distinguishing between simple and non-simple stabilization phenomena. 
		
	\end{remark}
	
	\noindent
	Thus the framework developed in this work places persistence invariants within a natural hierarchy:
	$$
	\text{Persistent Homology} \subset \text{Persistent Morse Complexity} \subset \text{Persistent Whitehead Torsion},
	$$
	where each successive layer refines the previous one by detecting phenomena that are invisible at lower levels. 


\begin{thebibliography}{99}
		
		%
		\bibitem{HA} H. Adams et al., Persistence images: a stable vector representation of persistent homology, \textit{J. Mach. Learn. Res.} {\bf 18} (2017), Paper No. 8, 35 pp.
		\bibitem{AB} K. A. Adiprasito and B. Benedetti. Collapsibility of ${\rm CAT}(0)$ spaces, \textit{Geom. Dedicata} {\bf 206} (2020), 181--199.
		\bibitem{CH} M. K. Chari, On discrete Morse functions and combinatorial decompositions, \textit{Discrete Mathematics},
		{\bf217}(1-3) (2000), 101–113.
		\bibitem{Coh} M. M. Cohen, A course in simple-homotopy theory, Graduate Texts in Mathematics, {\bf10},
		Springer-Verlag, New York-Berlin, (1973).
		\bibitem{ELZ} H. Edelsbrunner, D. Letscher and A.~J. Zomorodian, Topological persistence and simplification, \textit{Discrete Comput. Geom.} {\bf 28} (2002), no.~4, 511--533
		\bibitem{EH} H. Edelsbrunner and J.~L. Harer,  Computational topology, \textit{Amer. Math. Soc.}, Providence, RI, 2010.
		\bibitem{XF} X. L.  Fernández, Strong discrete Morse theory, arXiv: 2504.15729, (2025), 
		\url{https://arxiv.org/abs/2504.15729}.
		
		\bibitem{RB98} R. Forman,  Morse theory for cell complexes. \textit{Adv. Math.} {\bf134} (1) (1998), 90--145.
		\bibitem{RB02}
		R. Forman, A user's guide to discrete Morse theory, \textit{S\'em. Lothar. Combin.} {\bf 48} (2002), Art.\ B48c, 35 pp.
		
		
		\bibitem{AT}
		A. Hatcher,
		Algebraic Topology,
		\textit{Cambridge University Press}, (2002).
		
		
		\bibitem{ML}M. Lesnick, The theory of the interleaving distance on multidimensional persistence modules, \textit{Found. Comput. Math.} {\bf 15} (2015), no.~3, 613--650.
		\bibitem{VN} V. Nanda, Computational Algebraic Topology, Lecture Notes.
		\bibitem{JM} J. Milnor, Whitehead torsion, \textit{Bulletin of the Amer. Math. Soc.} \textbf{72}(3), (1966), 358--426.
		\bibitem{MN} K. Mischaikow and V. Nanda, Morse theory for filtrations and efficient computation of persistent homology, \textit{Discrete Comput. Geom.} {\bf 50} (2013), no.~2, 330--353.
		\bibitem{SN19}N. A. Scoville,  Discrete Morse Theory, \textit{Amer.  Math. Soc.} {\bf 90},(2019).
		\bibitem{OU}S. Y.
		Oudot, Persistence theory: from quiver representations to data analysis,
		{\bf209}, Mathematical Surveys and Monographs. \textit{Amer. Math.  Soc.}, Providence, RI, 2015.
		
		%
		\bibitem{JHC} J. H. C. Whitehead, Simple homotopy types, \textit{American Journal of Mathematics} \textbf{72}(1), (1950), 1--57.
		\bibitem{ZC05}
		A. Zomorodian and G. Carlsson,
		Computing persistent homology,
		\textit{Discrete Comput. Geom.} \textbf{33} 2 (2005), 249--274.
		
	\end{thebibliography}
\end{document}